
%
\documentclass{amsart}
%
%
\usepackage{amsmath}%
\usepackage{amsfonts}%
\usepackage{amssymb}%
\usepackage{graphicx}
\usepackage{float}
\usepackage{bbold}
\usepackage{caption}
\usepackage{subcaption}
\usepackage{xcolor} 


%
\newtheorem{theorem}{Theorem}
\theoremstyle{plain}

\newtheorem{corollary}{Corollary}

\newtheorem{definition}{Definition}

\newtheorem{lemma}{Lemma}

\newtheorem{proposition}{Proposition}
\newtheorem{remark}{Remark}

\newtheorem*{note}{Note}

\numberwithin{equation}{section}
\begin{document}
\nocite{*}
\raggedbottom
\title{Braidoids}
\author{Nesl\.{i}han G\"ug\"umc\"u}
\address{ Izmir Institute of Technology 
Department of Mathematics 
G\" ulbah\c ce Mah. URLA 35430 
\.Izmir, TURKEY}
\email{neslihangugumcu@iyte.edu.tr}%
\author{Sofia Lambropoulou}
\address{School of Applied Mathematical and Physical Sciences
National Technical University of Athens
Zografou Campus
GR-15780 Athens, GREECE}
\email{sofia@math.ntua.gr}%
\urladdr{http://www.math.ntua.gr/$\sim$sofia/}

\subjclass{2010 Mathematics Subject Classification: 57M27; 57M25} %
\keywords{Braidoids, knotoids, Markov theorem, Alexander theorem}%

\begin{abstract}
Braidoids generalize the classical braids and form a counterpart theory to the theory of planar knotoids, just as the theory of braids does for the theory of knots. In this paper, we introduce basic notions of braidoids, a closure operation for braidoids,  we prove an analogue of the Alexander theorem, that is, an algorithm that turns a knotoid into a braidoid, and we formulate and prove a geometric analogue of the Markov theorem for braidoids using the $L$-moves.
\end{abstract}

\maketitle

\section{Introduction}
A knotoid in an oriented surface $\Sigma$ is an equivalence class of oriented open-ended knot diagrams in $\Sigma$, with two endpoints that can lie in any local region determined by the diagram. The equivalence is generated by the Reidemeister moves and isotopies of $\Sigma$, which include the swinging of an endpoint within a region free of endpoints. View Figures \ref{fig:knotoid} and \ref{fig:om}. When $\Sigma$ is, in particular,  the 2-sphere $S^2$ the knotoids are named spherical knotoids. When $\Sigma$ is the plane they are named planar knotoids. 

The theory of knotoids was introduced by V. Turaev in 2011 \cite{Tu}. Turaev showed that the set of classical knots injects into the set of spherical knotoids, where a knot can be viewed as a knotoid with zero complexity. The complexity of a knotoid $K$ (or height, as the term used in \cite{GK1}) is the minimum number  of crossings over all diagrams of $K$, that are created when realizing the end-to-end (underpass) closure of each diagram of $K$ to a knot diagram. The above observation provides a strong motivation for using knotoids in computing classical knot invariants, especially those based on the number of crossings, since the computations would reduce exponentially with reducing the number of crossings in a diagram. We recall that the complete classification of knots is still a big open problem in Mathematics and it is tackled by constructing isotopy invariants for distinguishing pairs of different knots. Indeed, Turaev -among other results- defined the knot group in terms of knotoids, invariants for knotoids, such as the bracket polynomial, and he remarked on the fruitful connections of the theory with virtual knot theory. Turaev also showed that there is a surjective map from the set of planar knotoids to the set of spherical knotoids, which is not injective.

After the works \cite{Tu, DST} and \cite{Bar}, the interest in knotoids was rekindled by the second author, who proposed the subject to the first author for her PhD study \cite{Gthesis}. Then, in \cite{GK1} the first author with L.~H. Kauffman constructed new invariants for knotoids, considering the virtual end-to-end closure and employing techniques from virtual knot theory \cite{Kau2}. We recall that a virtual knot diagram contains classical as well as virtual crossings, where a virtual crossing has no information under or over, it roughly indicates a permutation of the two acs involved. Another interesting result in \cite{GK1} is the realization  of a  knotoid via its lifting to an open-ended curve embedded in 3-space. Through this lifting knotoids could serve as mathematical models for  proteins and the theory of knotoids could be used for analyzing their topology. Consequently, in \cite{GDBS} invariants of  spherical knotoids were applied, which rendered as much information as the use of the virtual closure, while in \cite{GGLDSK} the application of planar knotoids revealed richer structure.  

In parallel, the theory of braidoids was initiated by the authors of the present paper \cite{Gthesis,GL1} for counterparting the theory of planar knotoids, just as classical braids comprise  an algebraic counterpart to classical knots. A braid is a set of descending strands with paired top-to-bottom ends, and equivalence classes of braids under obvious planar isotopy and level preserving Reidemeister moves realize groups, the Artin braid groups of type $A$ \cite{Ar1,Ar2,KT}. The paired ends of a braid can be joined to form a closure, which is a knot or a link. The inverse operation consists in turning an oriented knot or link into an isotopic closed braid and this is always possible by the classical Alexander theorem \cite{Br,Al,Bir}. The Alexander theorem and its proof play a key role in the proof of the Markov theorem, which provides an equivalence relation among elements of the braid groups that corresponds precisely to the isotopy relation among oriented knots and links \cite{Ma,Wei,Bir,Ben}. The two theorems allow, in principle, for the use of braids and algebraic techniques in the study of knots and links and they were successfully used for the first time by V.~F.~R.~Jones  in his construction of the famous Jones polynomial \cite{jo1,jo2}. Consequently, these theorems  received anew the attention of several mathematicians \cite{Mo,Vo,Ya,Tr,Lathesis,LR1,BM}, whose works revealed diverse and interesting approaches.

In analogy to a braid diagram, a braidoid diagram consists in a set of descending strands such that two of them are special: one of them terminates at an endpoint, the head, and the other starts from the second endpoint, the leg. Either endpoint may lie in any region and at any height of the diagram. See Figure \ref{fig:basicbrds}  for examples. A braidoid, then, is the equivalence class of braidoid diagrams under obvious isotopy moves analogous to the knotoid equivalence moves, as  illustrated in Figures~\ref{fig:d}, \ref{fig:vertmo}, \ref{fig:brswing2}. In particular, endpoints may swing and there are also the forbidden moves in the theory, Figure~\ref{fig:f}. 
 Hence, the notion of braidoid extends the notion of classical braid. We study braidoids in relation to  knotoids by defining an appropriate closure operation, which respects the forbidden moves. The closure is defined on  braidoid diagrams with labelled ends, specifying whether the joining arcs will run all over or all under the rest of the diagram.  

An analogue of the classical Alexander theorem, stating that any knotoid diagram can be turned into a labeled braidoid diagram with isotopic closure,  was proved in \cite{GL1}. In this paper, we give a second proof of the Alexander theorem for braidoids, which is more adapted for proving an analogue of the classical Markov theorem. Indeed, we formulate and prove a geometric analogue of the Markov theorem for braidoids, using the concept of the $L$-move as introduced in \cite{Lathesis,LR1}. An $L$-move is a geometric as well as algebraic move for braids and it consists in cutting a braid arc at any point and pulling the two ends along the vertical line of the cutpoint, both over or both under the rest of the diagram, so as to obtain in the end a new pair of corresonding braid strands. The $L$-moves provide an one-move version of the classical Markov theorem \cite{LR1}. The set of braidoids does not support an obvious algebraic structure. So, it would not be possible to formulate a braidoid equivalence without the use of the $L$-moves. In \cite{Gthesis,GL1} a set of building blocks for braidoids is listed along with some relations that they satisfy. We then propose in  \cite{GL1} to encode a protein by the monomials of the building blocks for the braidoids corresponding to the  knotoids related to the protein. We finally relate braidoids to classical or virtual braids by defining appropriately the underpass resp. virtual closure. We recall that a virtual braid contains classical as well as virtual crossings, see \cite{Kau2,KL1}.

For further works on knotoids and applications the interested reader may consult \cite{AHKS,BBHL,DTRGDSSSMR,GN,KL,KMT}.

Let us now present the organization of the paper. In Section \ref{sec:knotoids} we review basics on knotoids. In Section \ref{sec:braidoids} we define the notion of braidoid diagram and the notion of braidoid by introducing  isotopy moves on braidoid diagrams. In Section \ref{sec:closure}, we explain a way to close a braidoid diagram with labels which will relate them to knotoids. Later, in Section \ref{sec:braiding} we describe an algorithm for turning a knotoid diagram into a labeled braidoid diagram with isotopic closure. This yields an analogue of the classical Alexander theorem for braidoids. In Section \ref{sec:L-eqv.} we adapt the classical $L$-moves, which were originally defined for braid diagrams by the second listed author in \cite{La1}, for braidoid diagrams. We also introduce the fake swing moves, which along with the $L$-moves comprise the $L$-equivalence. We then prove our geometric analogue of the Markov theorem for braidoids. Finally, in Section~\ref{sec:tobraids} we present  the underpass and virtual closures that relate a braidoid to a classical resp. a virtual braid.


\section{A review of knotoids}{\label{sec:knotoids}

Let $[0,1]$ be the unit interval and $\Sigma$ be any oriented surface. \textit{A knotoid diagram} $K$ in $\Sigma$ is an immersion $K: [0,1] \rightarrow \Sigma$ that is generic in the sense that there is only a finite number of double points appearing as transversal \textit{crossings} each with the extra information of under or over. The images of $0$ and $1$ are two distinct points disjoint from any crossings of $K$, too, and are called {\it leg} and {\it head} of $K$, respectively. Furthermore, $K$ inherits a natural orientation from its leg to its head. In Figure \ref{fig:knotoid} we show some examples of knotoid diagrams including the trivial knotoid diagram that admits no crossing.
 \begin{figure}[H]
\centering \includegraphics[width=1\textwidth]{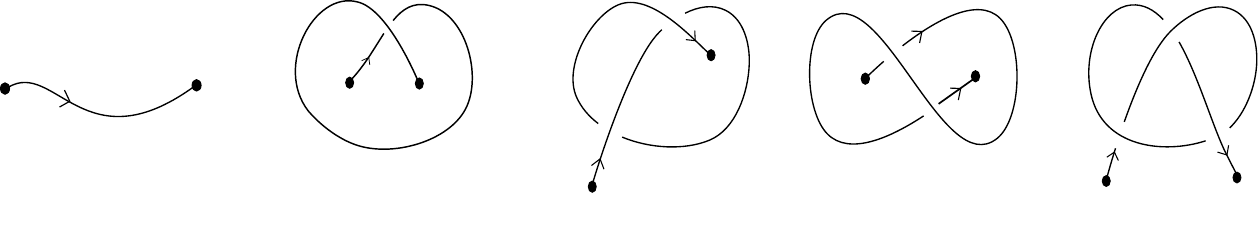}
\caption{Knotoid diagrams}
\label{fig:knotoid}
\end{figure}
 \begin{definition}\normalfont A \textit{piecewise-linear knotoid diagram} is a union of finitely many edges: $[p_1, p_2],...,[p_{n-1}, p_n]$ such that each edge intersects one or two other edges at the vertices, $p_i$, for $i=2,...,n-1$. The vertices $p_1$ and $p_n$ correspond to the endpoints of the diagram. Two edges can also intersect transversely at double points endowed with over/under-data, called crossings of the diagram.\end{definition} %

Any classical knot diagram can be turned into a piecewise-linear knot diagram and piecewise-linear isotopy classes are in bijection with ambient isotopy classes of knots \cite{BZ}. In a similar way any knotoid diagram can be turned into a piecewise-linear knotoid diagram. 
In this paper, we will be working for convenience with piecewise-linear knotoid diagrams. Moreover we will be considering $\Sigma=\mathbb{R}^2$.

\subsection{Moves on knotoid diagrams}\label{sec:knomo}

 A $\Delta$-\textit{move} on a knotoid diagram is a replacement of an arc with two arcs (or vice versa) forming a triangle which does not contain any of the endpoints of the knotoid diagram and passing entirely over or under any arcs intersecting this triangle, as shown in Figure \ref{fig:om}. The Reidemeister moves $\Omega_1$, $\Omega_2$, $\Omega_3$ (Figure \ref{fig:om}) are some special cases of $\Delta$-moves. We shall call a $\Delta$-move that takes place in a triangular region that does not contain any arcs of the diagram in its interior, an $\Omega_0$-move.  Finally, we have the \textit{swing moves} whereby an arc containing an endpoint sweeps a triangular region free of any other arcs of the diagram. The swing moves can be viewed as special cases of $\Omega_0$-moves where one side of the isotopy triangle is omitted. See Figure \ref{fig:om}. The moves consisting of pulling the strand adjacent to an endpoint over or under a transversal strand as shown in Figure \ref{subfig:for} are the \textit{forbidden knotoid moves}. 
It is clear that if both forbidden moves were allowed, any knotoid diagram in $S^2$ and in $\mathbb{R}^2$ could be turned into the trivial knotoid diagram. 
\begin{note}\normalfont
There are two situations where forbidden moves seemingly occur. Precisely, when the arc adjacent to an endpoint is involved in an $\Omega_1$- or an $\Omega_2$-move followed by a planar isotopy, as illustrated in Figure \ref{fig:fake}.  These moves have the same effect as one or two consecutive forbidden moves of same type. We shall call these moves \textit{fake forbidden moves}.
\end{note}
\begin{figure}[H]
 \centering 
\includegraphics[width=.7\textwidth]{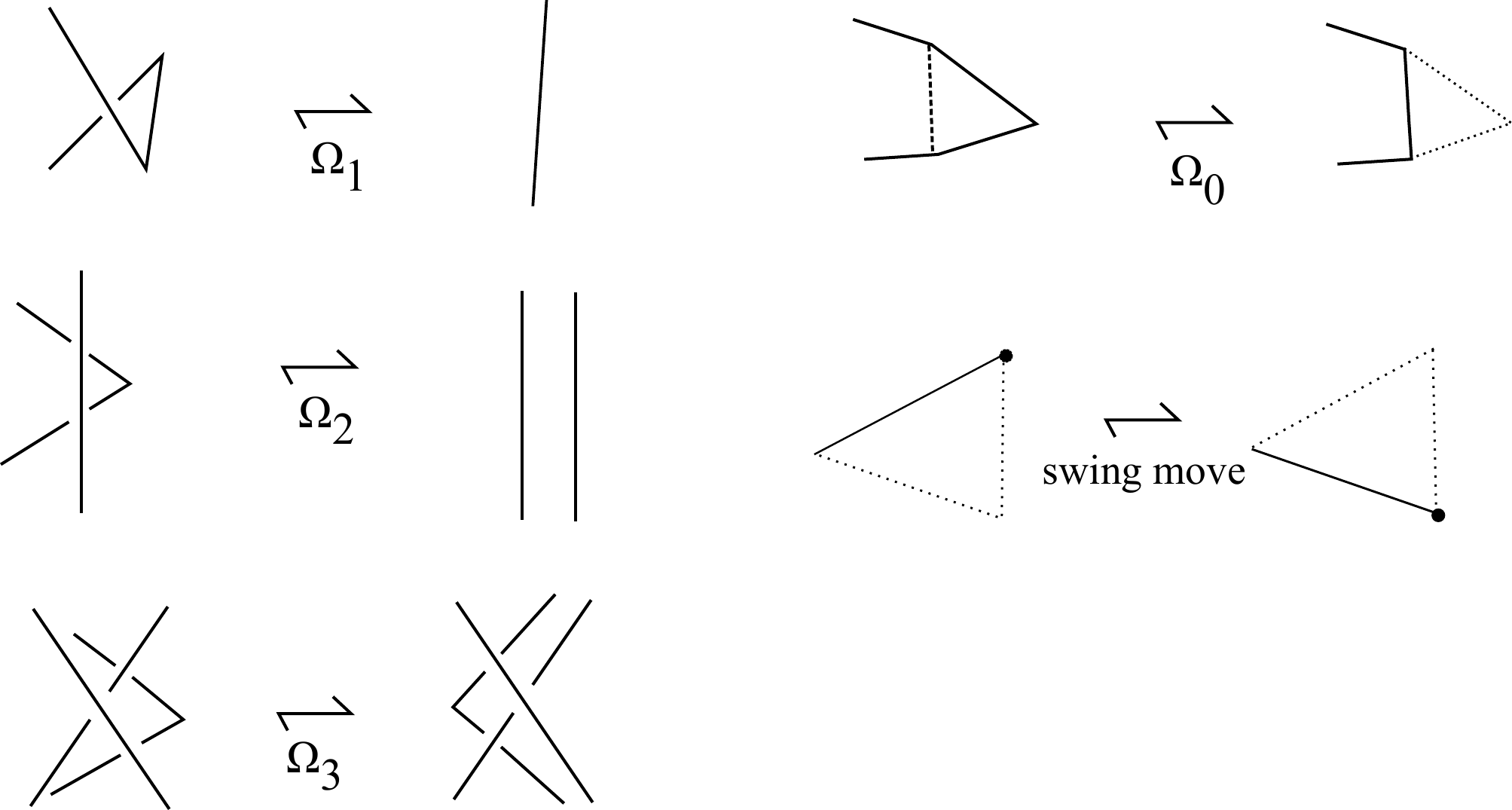}
\caption{$\Delta$-moves}
 \label{fig:om}
\end{figure}

	\begin{figure}[H]
		\Large{
        \centering  \scalebox{0.4}{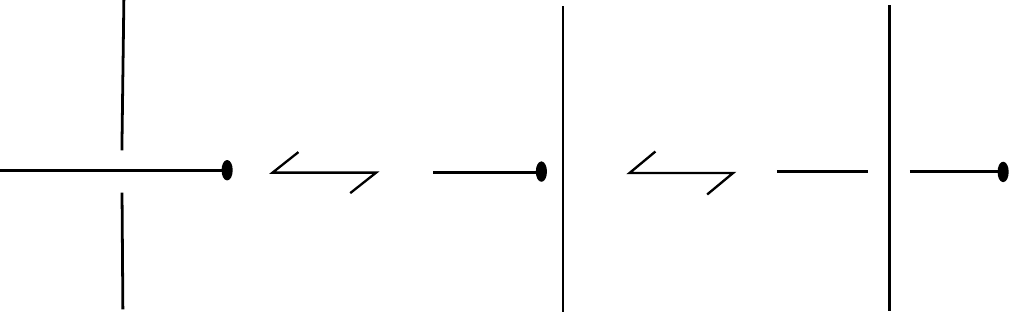}
				}
        \caption{Forbidden knotoid moves}
        \label{subfig:for}
		\end{figure}
		
		\begin{figure}[H]
		\Large{
        \centering  \includegraphics[width=1\textwidth]{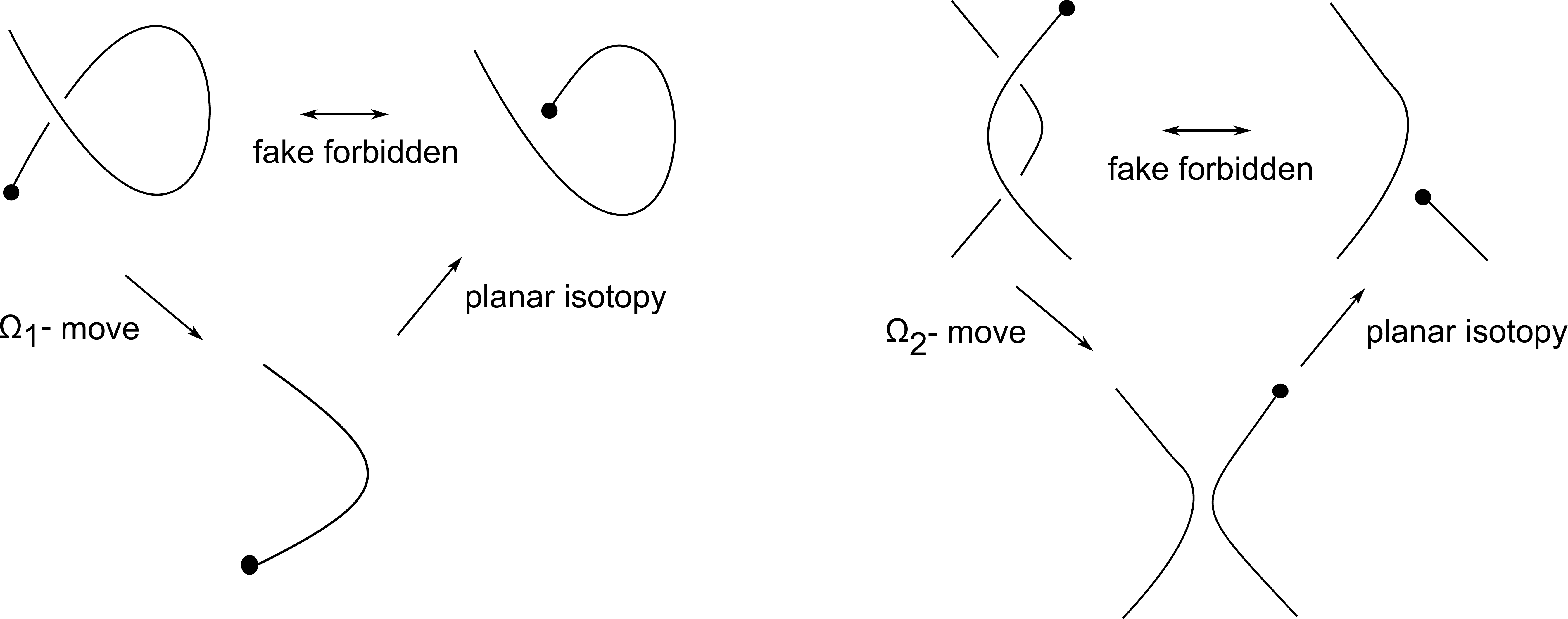}
				}
        \caption{Fake forbidden moves}
        \label{fig:fake}
		\end{figure}
		
 We shall call $\Omega_0, \Omega_1,\Omega_2,\Omega_3$ moves together with the swing moves the $\Omega$-{\it moves}. Two knotoid diagrams are \textit{isotopic} to each other if there is a finite sequence of $\Omega$-moves that transforms one into the other. The isotopy generated is clearly an equivalence relation and the isotopy classes of knotoid diagrams are called \textit{knotoids in} $\mathbb{R}^2$. The set of all knotoids in $\mathbb{R}^2$ is denoted by $\mathcal{K}(\mathbb{R}^2)$.

\subsection{Extending the definition of knotoids}
 A \textit{multi-knotoid diagram} in $S^2$ or in $\mathbb{R}^2$ \cite{Tu} is an extended knotoid diagram having a finite number of knot diagrams. The equivalence relation defined on knotoid diagrams applies to multi-knotoid diagrams directly, and the corresponding equivalence classes are called \textit{multi-knotoids}. 
Note that other notions generalizing knotoid can be introduced. For instance we can also define a \textit{linkoid diagram} as an immersion of a number of oriented intervals as a generalizing concept to $(n, n)$- tangles. See \cite{Gthesis} for a discussion on such generalizations.

In this paper we work with multi-knotoids in $\mathbb{R}^2$.


\section{Braidoids}{\label{sec:braidoids}}

In this section we define braidoid diagrams and the isotopy classes of them that we call braidoids. A braidoid diagram is defined similarly to a braid diagram as a system of finite descending strands. The main difference is that a braidoid diagram has one or two of its strands starting with/terminating at an endpoint that is not necessarily at top or bottom lines of the defining region of the diagram.

\subsection{The definition of a braidoid diagram}

\begin{definition}\normalfont \label{braidoiddefn}
Let $I$ denote the unit interval $[0,1] \subset \mathbb{R}$. 
A \textit{braidoid diagram} $B$ is a system of a finite number of arcs immersed in $I \times I\subset \mathbb{R}^2$. We identify $\mathbb{R}^2$ with the $xt$-plane with the $t$-axis directed downward. The arcs of $B$ are called the \textit{strands} of $B$.  Following the natural orientation of $I$, each strand is naturally oriented downward, with no local maxima or minima. There are only finitely many intersection points among the strands,  which are  transversal double points endowed with over/under data. Such intersection points are called \textit{crossings} of $B$. 

 A braidoid diagram has two types of strands, the classical strands and the free strands. A \textit{classical strand} is like a braid strand connecting a point on $I \times\{0\}$ to a point on $I \times\{1\}$. A \textit{free strand} either connects a point in $I\times\{0\}$ or $I \times \{1\}$ to a special point that is located anywhere in $I \times I$ or connects two special points located anywhere in $I \times I$. These special points are called the \textit{endpoints} of $B$. A braidoid diagram contains either one free strand (see for example Figure~\ref{fig:basicbrds}(a)) or two free strands (see for example Figure~\ref{fig:basicbrds} (b), (c), (d), (e)) and exactly two endpoints. The endpoints  are specifically named as the \textit{leg} and the \textit{head} and are emphasized by graphical nodes labeled by $l$ and $h$, respectively, in analogy with the endpoints of a knotoid diagram. Precisely, the head is the endpoint that is terminal for a free strand with respect to the orientation, while the leg is the starting endpoint for a free strand with respect to the orientation.  See some examples of braidoid diagrams in Figure \ref{fig:basicbrds}.
\begin{figure}[H]
\centering \includegraphics[width=.85\textwidth]{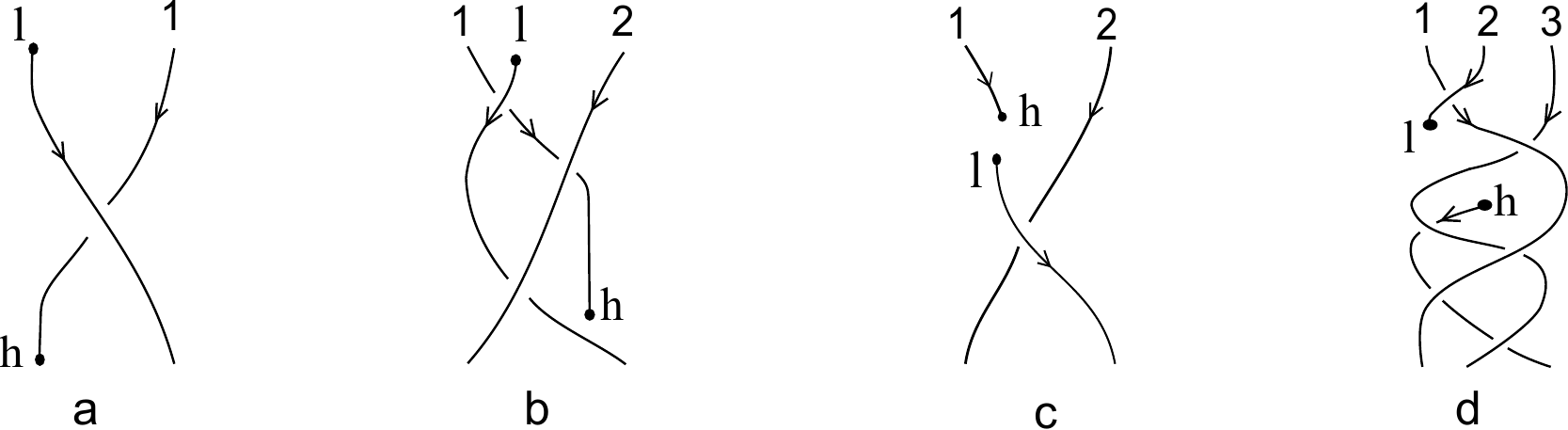} 
\caption{Some examples of braidoid diagrams}
\label{fig:basicbrds}
\end{figure}


The ends of the strands of $B$ other than the endpoints are called \textit{braidoid ends}. We assume that braidoid ends lie equidistantly on the top and the bottom lines and none of them is vertically aligned with any of the endpoints. It is clear that the number of braidoid ends that lie on the top line is equal to the number of braidoid ends that lie on the bottom line of the diagram. The braidoid ends on top and bottom lines are arranged in pairs so that they are vertically aligned and are called \textit{corresponding ends}. We number them with nonzero integers according to their horizontal order (from left to right), as in the examples illustrated in Figure~\ref{fig:basicbrds}. 
\end{definition}
Note that the endpoints of $B$ differ conceptually  from its braidoid ends. As we will see in the next section, the endpoints are subject to some isotopy moves unlike  the braidoid ends, and the endpoints do not participate in the closure operation we introduce unlike the braidoid ends. 
 
 A braidoid diagram is \textit{piecewise-linear} if all of its strands are formed by consecutive linear segments. Any braid diagram can be represented by a piecewise-linear braid diagram. Likewise, any braidoid diagram can be represented by  a piecewise-linear braidoid diagram.  We shall consider piecewise-linear braidoid diagrams, when convenient.

\subsection{Braidoid isotopy}\label{sec:isotopy}

There are two types of local moves generating the braidoid isotopy.

\subsubsection{Moves on segments of strands} 

We adapt the \textit{$\Delta$-moves} introduced in Section \ref{sec:knomo} to braidoid diagrams. A braidoid $\Delta$-\textit{move} replaces a segment of a strand with two segments in a triangular disk free of endpoints,  passing only over or under the arcs intersecting the triangular region of the move whilst the downward orientation of the strands is preserved (see Figure~\ref{fig:d}). The oriented $\Omega_0$, $\Omega_2$ and $\Omega_3$, which keep the arcs in the move patterns directed downward, can be viewed as special cases of braidoid $\Delta$-moves.  

\begin{figure}[H]
\centering  \scalebox{.2}{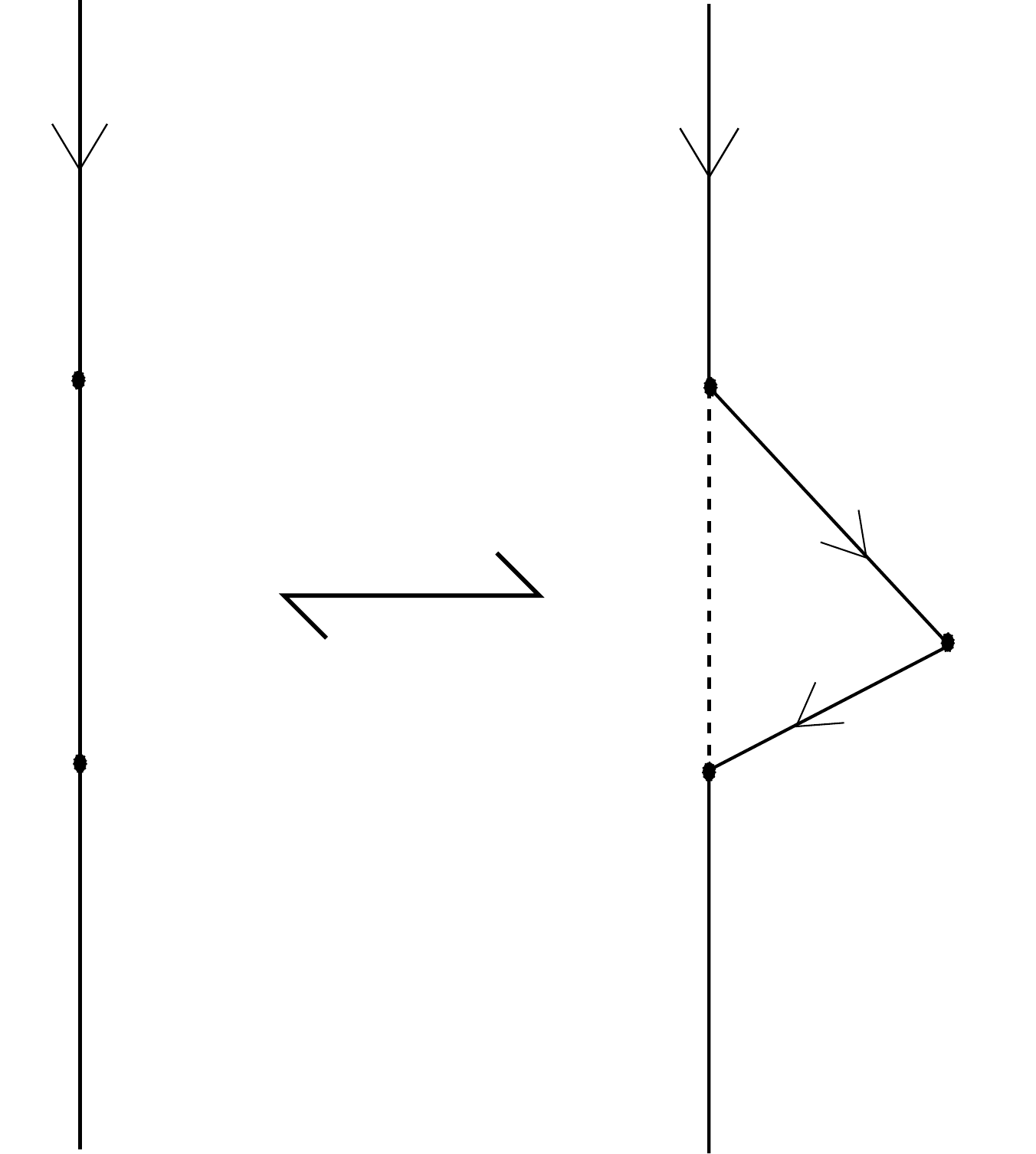}
\caption{A planar $\Delta$-move on a braidoid diagram}
\label{fig:d}
\end{figure}

\subsubsection{Moves of endpoints}
  
	Like for knotoid diagrams, we forbid to pull/push an endpoint of a braidoid diagram over or under a strand, as shown in Figure~\ref{fig:f}. These are {\it forbidden moves} on braidoid diagrams. It is clear that allowing both forbidden moves can cancel any braiding of the free strands. 

\begin{figure}[H]
\centering
\scalebox{.3}{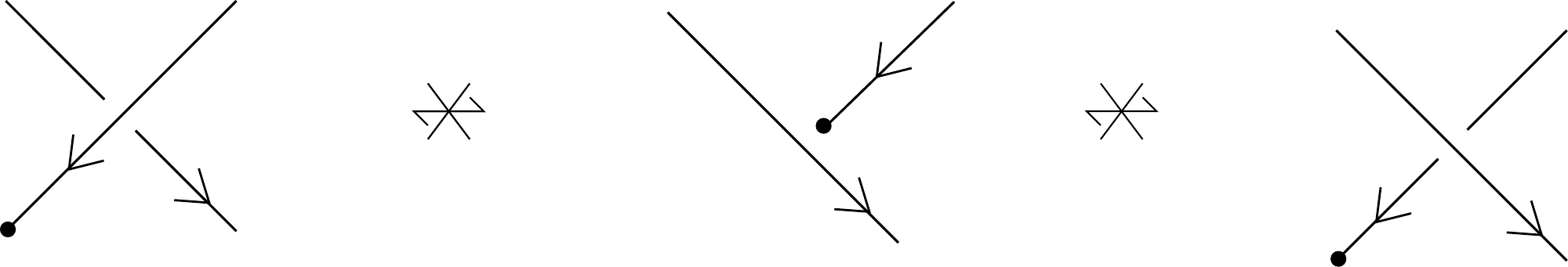}
\caption{Forbidden braidoid moves}
\label{fig:f}
\end{figure}

We allow the following moves on segments of braidoid strands containing endpoints.

	\begin{enumerate}
	\item \textit{Vertical Moves}:
	As shown in Figure \ref{fig:vertmo}, the endpoints of a braidoid diagram can be pulled up or down in the vertical direction as long as they do not violate any of the forbidden moves (e.g. crossing through or intersecting any strand of the diagram). Such moves are called \textit{vertical moves}. 
 \item \textit{Swing Moves}: An endpoint can also swing to the right or the left like a pendulum (see Figure \ref{fig:brswing2}) 
as long as the downward orientation on the moving arc is preserved, and the forbidden moves are not violated.
\end{enumerate}
\begin{figure}[H]
\centering
\includegraphics[width=.4\textwidth]{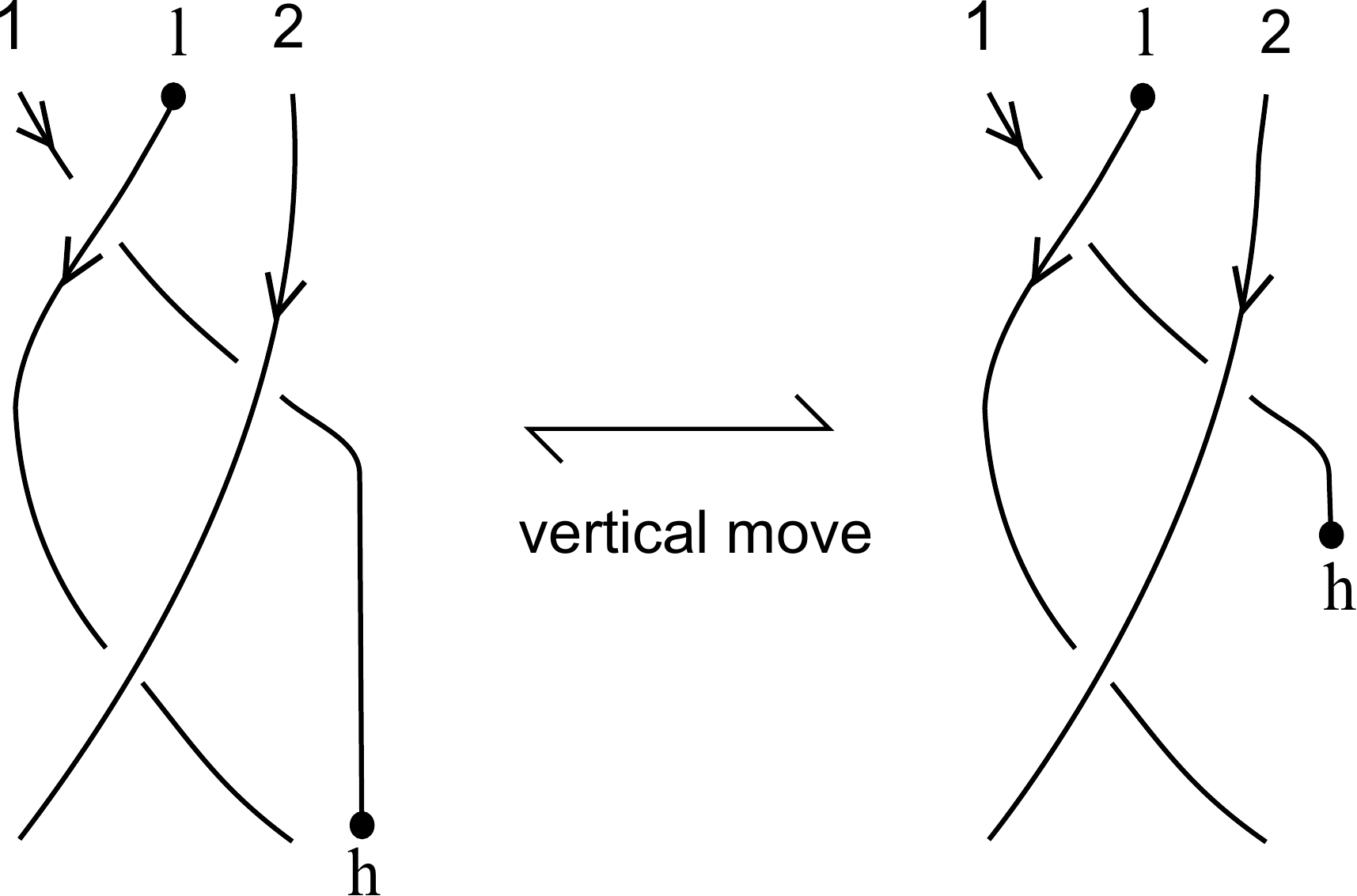}
\caption{A vertical move on $h$}
\label{fig:vertmo}
\end{figure}
\begin{figure}[H]
\centering
\includegraphics[width=.5\textwidth]{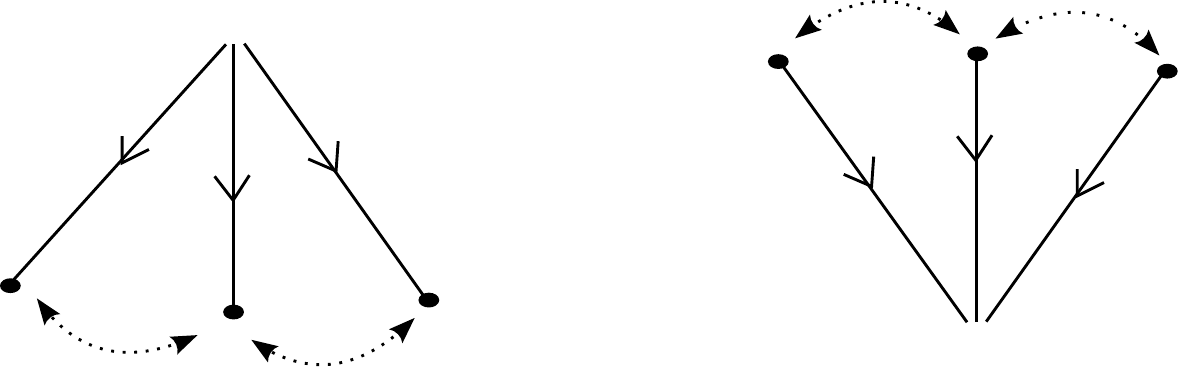}
\caption{The swing moves for braidoids}
\label{fig:brswing2}
\end{figure}

\begin{definition}[braidoid isotopy]\normalfont\label{defn:isotopy}
 It is clear that assuming braidoid ends fixed at the top and bottom lines, the braidoid $\Omega_2$ and $\Omega_3$- moves together with braidoid $\Omega_{0}$-moves and the swing and vertical moves for the endpoints generate an equivalence relation on braidoid diagrams in $\mathbb{R}^2$. Two braidoid diagrams are said to be \textit{isotopic} if one can be obtained from the other by a finite sequence of braidoid isotopy moves. An isotopy class of braidoid diagrams is called a {\it braidoid}. 
\end{definition}


\section{From braidoids to planar knotoids - a closure operation}\label{sec:closure}

 We present a closure operation on braidoid diagrams in analogy with the closure of braids in handlebodies \cite{LO}. In order to do this, we introduce the notion of labeled braidoids.

 \begin{definition}\normalfont
A \textit{labeled braidoid diagram} is a braidoid diagram whose corresponding ends are labeled either with $o$ or $u$ in pairs. See Figures \ref{fig:c}, \ref{fig:cll}. Each label indicates either an \textit{overpassing} or \textit{underpassing} arc, respectively, that will take place in the closure operation explained below.
\end{definition}

 \begin{definition}\normalfont \label{closure}
Let $B$ be a labeled braidoid diagram. The \textit{closure} of $B$, denoted $\widehat{B}$, is a planar (multi)-knotoid diagram obtained by the following  topological operation: each pair of corresponding ends of $B$ is connected with an embedded arc (with slightly tilted extremes) that runs along the right hand-side of the vertical line passing through the ends and in a distance arbitrarily close to this line. The connecting arc goes {\it entirely over} or {\it entirely under} the rest of the diagram according to the label of the ends. We demonstrate abstract and concrete examples in Figure~\ref{fig:c} and~\ref{fig:cll}, respectively. 
\end{definition}
\begin{figure}[H]
\centering 
\includegraphics[width=.15\textwidth]{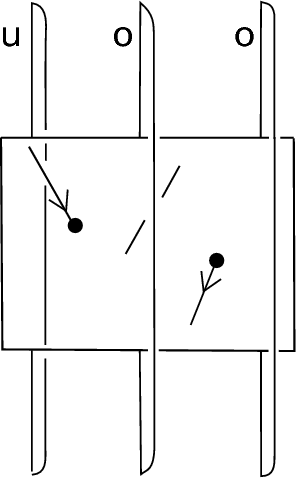}
\caption{The closure of an abstract labeled braidoid diagram}
\label{fig:c}
\end{figure}

The reason that a joining arc is required to lie in an arbitrarily close distance to the line of the related corresponding ends is that, otherwise,  forbidden moves may obstruct an isotopy of $\widehat{B}$ between any two joining arcs. Notice also that the resulting multi-knotoid depends on the labeling of the braidoid ends. In Figure~\ref{fig:cll} we see two labeled braidoid diagrams induced by the same underlying braidoid diagram via different labelings, give rise to non-equivalent knotoids. 
 
On labeled braidoid diagrams we allow the braidoid $\Delta$-moves and the vertical moves to take place on labeled braidoid diagrams and we forbid the forbidden braidoid moves. However, we do not allow swing moves for labeled braidoid diagrams in full generality. We only allow the {\it restricted swing moves} whereby the swinging of an endpoint takes place within the interior of the vertical strip determined by the neighboring vertical lines passing through two consecutive pairs of corresponding braidoid ends (see Figure \ref{fig:brswing}).
The reason for restricting the swing moves is because if the endpoints surpass the vertical lines of the corresponding ends this will cause forbidden moves on the closure.  
\begin{definition}\normalfont
 \textit{Labeled braidoid isotopy} is generated by the braidoid $\Omega$-moves, the vertical moves and the restricted swing moves, preserving at the same time the labeling.
Equivalence classes of labeled braidoid diagrams under this isotopy relation are called \textit{labeled braidoids}. 
\end{definition}  

\begin{figure}[H]
\centering
\includegraphics[width=.5\textwidth]{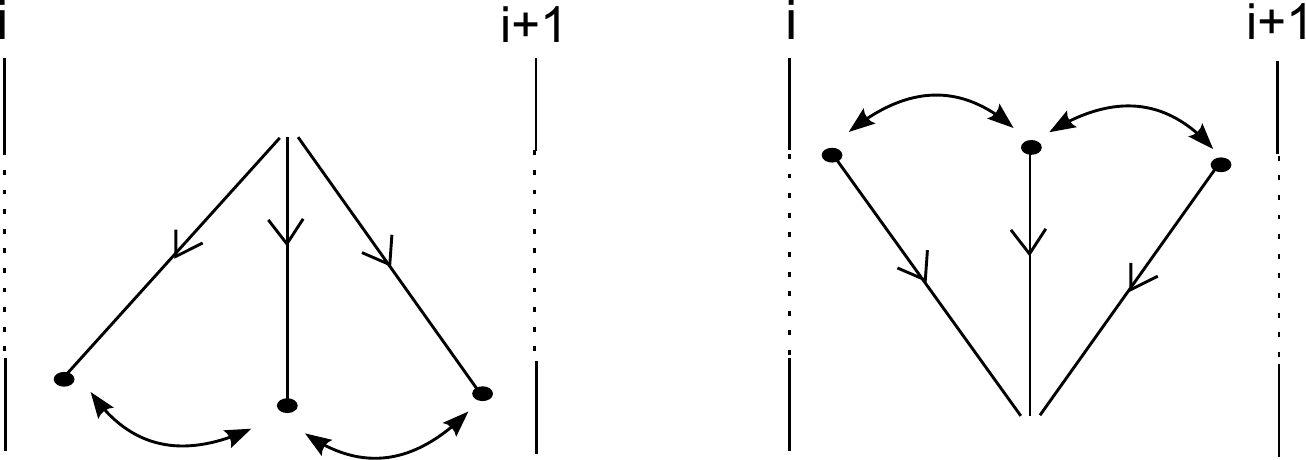}
\caption{The restricted swing moves for braidoids}
\label{fig:brswing}
\end{figure}

\begin{figure}[H]
\centering
\includegraphics[width=1\textwidth]{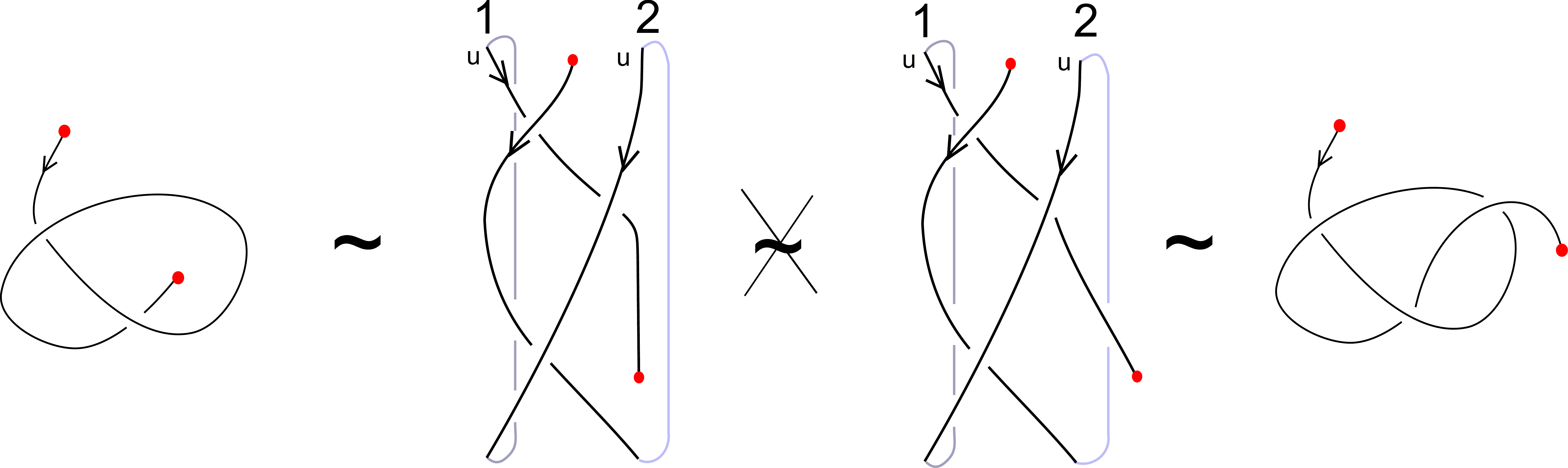}
\caption{The restricted swing moves for braidoids}
\label{fig:rest}
\end{figure}

\begin{lemma}
The closure operation induces a well-defined mapping from the set of labeled braidoids to the set of multi-knotoids in $\mathbb{R}^2$.
\end{lemma}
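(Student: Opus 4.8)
The plan is to establish two things. First, for a single labeled braidoid diagram $B$, the closure $\widehat{B}$ is a well-defined multi-knotoid, independent of the choices made in Definition~\ref{closure} (the precise distances of the joining arcs from the vertical lines, the shape of their tilted extremes). Second, if $B$ and $B'$ are related by one generating move of labeled braidoid isotopy, then $\widehat{B}$ and $\widehat{B'}$ are isotopic multi-knotoid diagrams. Iterating the second statement along a finite sequence of moves, and invoking the first to make the closure meaningful at each stage, yields the lemma.

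For the first statement, $\widehat{B}$ is indeed a multi-knotoid diagram: it has finitely many crossings (those of $B$ plus the finitely many transversal intersections created where a joining arc runs over or under a strand of $B$), its two endpoints are the leg and head of $B$, and the closed-up classical strands form a finite disjoint union of loops, which is the ``multi'' part. Now two admissible joining arcs for a fixed pair of corresponding ends numbered $i$ lie in arbitrarily thin neighbourhoods of the vertical line through those ends and carry the same label. Taking these neighbourhoods thin enough, the two arcs cross exactly the same strands of $B$ with the same over/under behaviour (prescribed by the label), and the region between them is a disk containing no crossing, no other strand, and---using that no endpoint of $B$ is vertically aligned with a braidoid end and that the arcs may be pushed arbitrarily close to the line---no endpoint. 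Hence one arc is taken to the other by finitely many $\Omega_0$-moves, so $\widehat{B}$ is well defined up to knotoid isotopy.

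For the second statement, run through the generators of labeled braidoid isotopy; each preserves the labeling, so the joining arcs of $B$ and of the result of the move may be chosen identical. A braidoid $\Omega_0$, $\Omega_2$ or $\Omega_3$ move is supported in a disk in the interior of $I\times I$, and since all joining arcs may be taken inside arbitrarily thin neighbourhoods of the vertical lines of the corresponding ends, we may assume they miss this disk; then after closure the move is literally the corresponding $\Omega$-move on $\widehat{B}$. A vertical move slides an endpoint along a vertical segment disjoint from every strand of $B$; as this endpoint is not vertically aligned with any braidoid end, the segment misses every joining arc once these are thin enough, so on $\widehat{B}$ the move is a planar isotopy in an arc-free region, realized by swing and $\Omega_0$ moves. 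A restricted swing move sweeps an endpoint across a triangle $T$ lying in the open vertical strip between two consecutive corresponding-end lines and free of strands of $B$; since $T$ keeps a positive distance from the left bounding line, choosing the $i$-th joining arc closer to that line than this distance makes $T$ disjoint from it, while the $(i+1)$-st joining arc lies beyond the right bounding line, so $T$ meets no arc of $\widehat{B}$ and the move is a knotoid swing move on $\widehat{B}$.

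The crux of the lemma---and the point I expect to require the most care to state cleanly---is precisely this last move: an unrestricted swing could drag an endpoint across one of the vertical lines through corresponding ends, whereupon any realization on the closure would force that endpoint to pass through the corresponding joining arc, which is a forbidden knotoid move; thus it is exactly the restriction imposed on swing moves, together with the requirement that the joining arcs hug the vertical lines of the corresponding ends, that makes the assignment $[B]\mapsto[\widehat{B}]$ well defined. With these conventions in force, each of the verifications above is routine, and I anticipate no further obstacle.
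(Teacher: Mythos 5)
Your proposal is correct and follows essentially the same route as the paper: reduce to the generating moves of labeled braidoid isotopy and check that each one becomes a knotoid $\Omega$-move or planar isotopy on the closure. You supply more detail than the paper does (independence of the choice of joining arcs, and the explicit role of the restriction on swing moves), but the underlying argument is the same.
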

\begin{proof}
Let $b_1$ and $b_2$ be two labeled braidoid diagrams representing the same labeled braidoid. Then $b_1$ and $b_2$ differ from each other by braidoid isotopy moves. It is clear that braided $\Omega_2$ and $\Omega_3$-moves  are transformed into a sequence of knotoid $\Omega_2$ and $\Omega_3$-moves. Also, the vertical and swing moves are transformed into planar isotopy on the (multi)-knotoid diagram obtained by the closure. Therefore the closures of $b_1$ and $b_2$ are isotopic (multi)-knotoid diagrams. 

\end{proof}

 

\begin{figure}[H]
\centering 
\includegraphics[width=.9\textwidth]{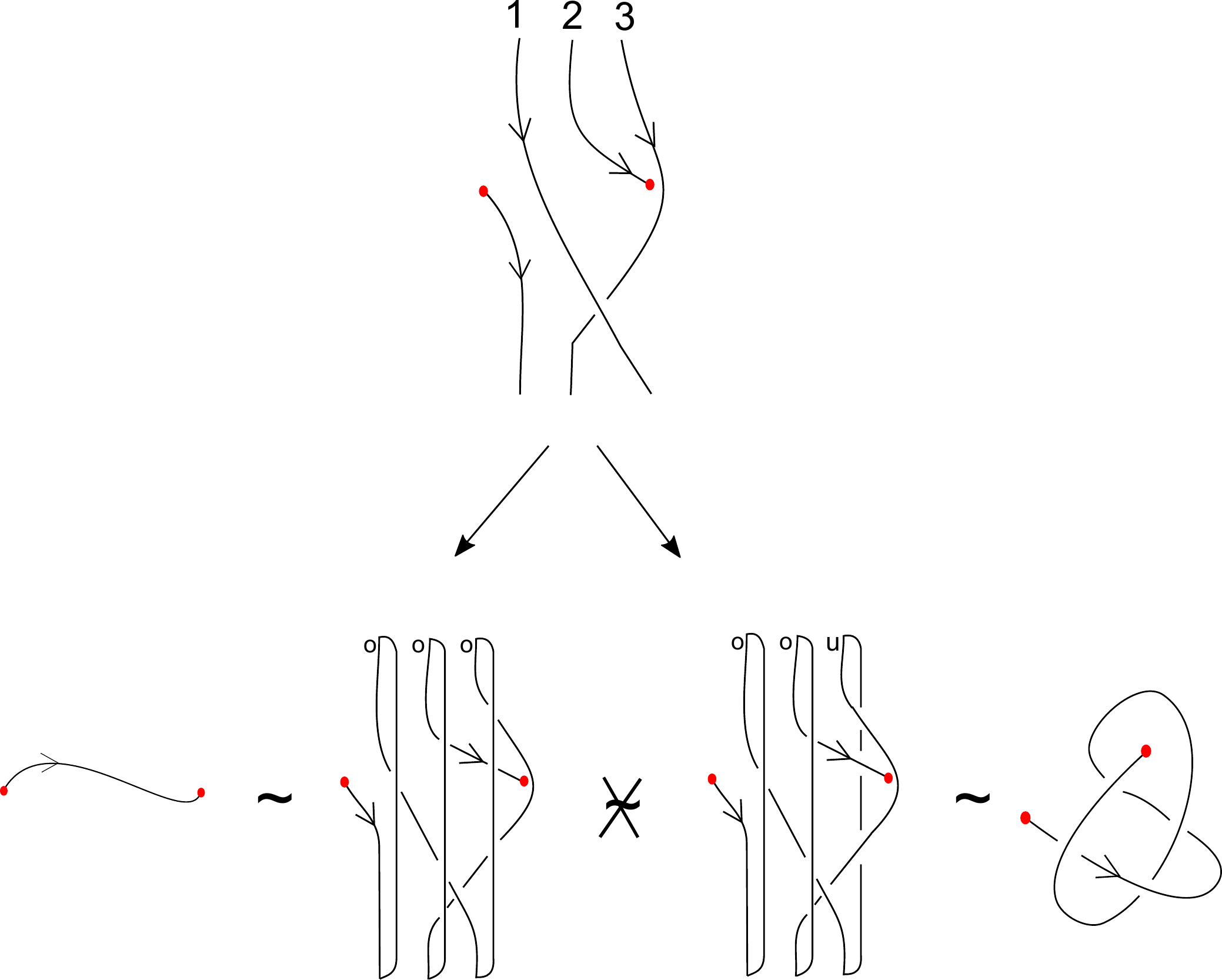}
\caption{An example of non-equivalent labeled closures}
\label{fig:cll}
\end{figure}
\section{An algorithm for obtaining labeled braidoids from planar knotoids}\label{sec:braiding}
In this section we present the braidoiding moves on knotoid diagrams that induce algorithms for turning any planar (multi)-knotoid diagram into a labeled braidoid diagram \cite{Gthesis}. By these algorithms we obtain the following theorem.

\begin{theorem}(An analogue of the Alexander theorem for knotoids) \label{thm:alex}
Any (multi)-knotoid diagram in $\mathbb{R}^2$ is isotopic to the closure of a labeled braidoid diagram.
\end{theorem}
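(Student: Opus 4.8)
The plan is to adapt the classical Alexander braiding algorithm — the subdivision-and-pulling procedure of Yamada--Vogel type — to the knotoid setting, with careful attention to the two endpoints and the forbidden moves. First I would fix a piecewise-linear (multi)-knotoid diagram $K$ in $\mathbb{R}^2$ and choose a generic point, which I will call the \emph{braiding axis basepoint}, together with a direction that will play the role of ``downward''; equivalently, work in polar-type coordinates so that ``descending'' means ``winding positively around the basepoint''. An edge of $K$ is called \emph{braided} (or \emph{positive}) if, traversed along the orientation of $K$, it winds in the positive direction around the basepoint, and \emph{non-braided} (\emph{negative}) otherwise. The goal is to eliminate all non-braided edges by local modifications that do not change the isotopy class of $K$ and do not use forbidden moves, after which cutting the diagram along a ray from the basepoint produces a braidoid diagram whose closure is $K$.

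Second, I would treat the non-braided edges. For a non-braided edge not adjacent to an endpoint, use the standard Alexander trick: subdivide it if necessary so each piece is short, then replace each short non-braided edge by two braided edges that go ``around'' the basepoint, pushing the new arcs either entirely over or entirely under the local sheets of the diagram they cross — this is exactly a $\Delta$-move ($\Omega_0$ together with over/under passes), hence a knotoid isotopy. This is the \emph{braidoiding move} referred to before the statement. The subtlety special to knotoids is the treatment of the two free edges, the one containing the leg and the one containing the head: here one is not allowed to slide an endpoint over or under a strand. I would handle this by first using vertical/swing-type isotopies of $K$ to move each endpoint into a small disk that is free of other arcs and positioned so that the free edge emanating from it is already braided (this is possible because the endpoint can be swung and the adjacent arc shortened without crossing anything); then the endpoint edge is not subjected to any braidoiding move at all, and the remainder of the free strand is treated like any other edge. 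The labels $o$ and $u$ on the resulting braidoid ends are dictated by whether the corresponding braidoiding move was performed over or under the rest of the diagram.

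Third, once every edge is braided, the diagram meets every ray from the basepoint transversally and always in the positive rotational sense; cutting along one such ray (chosen to avoid all crossings and both endpoints, and to be ``vertically aligned'' with none of them, as required by Definition~\ref{braidoiddefn}) and then straightening the annular picture into the strip $I\times I$ yields a braidoid diagram $B$: the classical strands come from the arcs of $K$ not meeting the free edges, and the two free strands, carrying the leg and the head, come from the endpoint edges, with the endpoints landing at arbitrary heights in $I\times I$ as required. By construction the closure $\widehat{B}$, formed by reconnecting corresponding ends with arcs running close to the cutting ray with the prescribed $o/u$ labels, is isotopic to $K$: undoing the straightening and the braidoiding $\Delta$-moves returns $K$ exactly. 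Hence $K$ is isotopic to the closure of a labeled braidoid diagram, and since $K$ was an arbitrary (multi)-knotoid diagram, the theorem follows.

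The main obstacle I expect is the endpoint bookkeeping: one must be sure that the isotopies used to pre-position the leg and head, and the braidoiding moves performed on the rest of each free strand, never amount to pushing an endpoint past a strand, i.e. never secretly perform a forbidden move, and also that after cutting, the endpoints do not end up vertically aligned with any braidoid end or with each other. Everything else is a routine adaptation of the classical Alexander argument, but this point needs the genuinely new input and is presumably where the detailed case analysis (and the ``fake forbidden move'' caveat recorded in the note above) will be spent.
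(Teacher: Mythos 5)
Your strategy is a genuine alternative to the paper's: you run the classical circular Alexander braiding (winding around an axis basepoint, throwing negative edges over the axis, then cutting along a ray), whereas the paper works in the rectangular picture from the start, eliminating up-arcs with respect to a fixed vertical direction by cutting at the top-most point and pulling the two ends to the top and bottom lines \emph{vertically aligned with the cut-point}, with the contraction controlled by an explicit sliding triangle. That choice is not cosmetic, and it is where your proposal has two concrete gaps. First, the closure in Theorem~\ref{thm:alex} is the one of Definition~\ref{closure}: each pair of corresponding ends is joined by an arc hugging the \emph{vertical line through that pair}, passing entirely over or entirely under per its label. Your construction naturally reconnects corresponding ends along the cutting ray (i.e.\ around the top and bottom of the straightened strip). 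In the classical theory these two closures agree because the joining arcs can be slid freely, but for knotoids sliding a joining arc from ``near the ray'' to ``near its vertical line'' sweeps a region of the strip that may contain the leg or the head, which is exactly a forbidden move. So as written you prove that $K$ is isotopic to an annular reconnection of $B$, not to $\widehat{B}$ in the sense the theorem requires; the paper sidesteps this by building the braidoiding move so that the new pair of strands is vertically aligned with the cut-point, whence the Definition~\ref{closure} joining arc contracts back to the original up-arc through the sliding triangle.

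Second, and more importantly, the step you defer to ``detailed case analysis'' is the actual content of the theorem in the knotoid setting. Pre-positioning the endpoints so that their adjacent edges are already braided (the paper's analogue: the endpoints lie on down-arcs) is easy and you handle it correctly, but it does not address the real danger: when a \emph{different} non-braided edge is eliminated, the triangular region swept by the associated $\Delta$-moves may contain an endpoint, and a $\Delta$-move over a triangle containing an endpoint is not a legal knotoid isotopy. The paper isolates this as the endpoint triangle condition and proves (by further subdivision of the offending up-arc, using the horizontal and vertical lines through the endpoint) that it can always be achieved, together with the classical triangle condition that removes the clasp obstructions between same-labeled up-arcs with intersecting sliding triangles (Lemma~\ref{lem:triii}). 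Your ``subdivide so each piece is short'' gestures at the latter but not the former, since no amount of shortening a non-braided edge helps if an endpoint sits in the sector it must sweep; one must subdivide \emph{at the right places} relative to the endpoints. Until you state and prove the analogous condition for your swept sectors, the proof is the classical Alexander argument plus a promissory note for precisely the part that is new here. Note also that the paper's rectangular, cut-point-aligned version is chosen deliberately so that each braidoiding move is literally an $L$-move, which is what makes the subsequent Markov-type theorem tractable; your circular version would force a translation step there.
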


In \cite{Gthesis} two such algorithms for proving Theorem \ref{thm:alex} are presented. One of the algorithms appearing also in \cite{GL1} is conceptually lighter than the other one presented here, see also Remark \ref{alg}. The algorithm we present here is more `rigid' in the sense that it assumes knotoid diagrams as rigid diagrams. This makes the algorithm more appropriate for proving a braidoid equivalence result analogous to the classical Markov theorem for classical braids.

\subsection{The basics of the braidoiding algorithm}
Let $K$ be a (multi)-knotoid diagram in a plane identified with the $xt$-plane. We describe below how to manipulate $K$ in order to obtain a labeled braidoid diagram, after endowing the plane of $K$ with top-to-bottom direction. We will be assuming that $K$ lies in $[0,1]\times[0,1]$ since $K$ is compact.

\subsubsection{Up-arcs and free up-arcs} 
It is clear that by small perturbations $K$ can be assumed to be a diagram  without any horizontal or vertical arcs. Thus $K$ consists of a finite number of arcs oriented either upward or downward, and these arcs are separated by finitely many local maxima or minima. The  arcs of $K$ that are oriented upward are called \textit{up-arcs} and the ones oriented downward are called \textit{down-arcs} of $K$. An up-arc may contain crossings of different types (over/under-crossings) or no crossing at all. See Figure \ref{fig:up}.  An up-arc that contains no crossing is called a \textit{free up-arc}. 
\begin{figure}[H]
\centering
\includegraphics[width=.75\columnwidth]{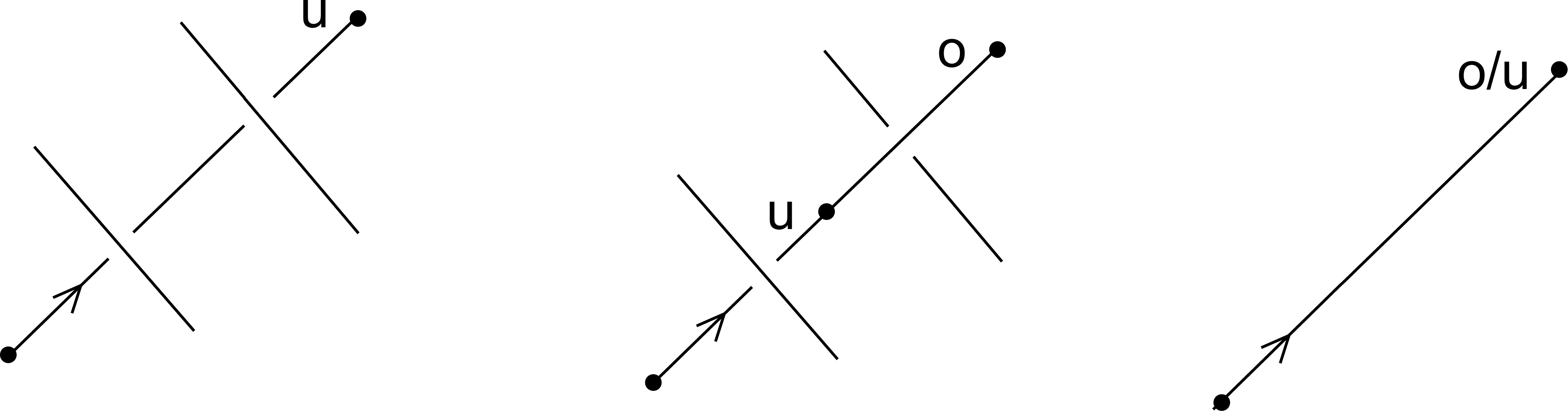}
\caption{ Two up-arcs containing crossings and a free up-arc}
\label{fig:up}
\end{figure}

\subsubsection{Subdivision} \label{sec:subdivi}
We start by marking the local maxima and minima of $K$ with points, which we name as {\it subdividing points}. 
In the process we may need to subdivide further some of the up-arcs of $K$ so that each one contains crossings of only one type. We attach a label to each up-arc accordingly to the crossing type it contains: we attach $o$ if the up-arc contains over-crossing(s), $u$ if the up-arc contains under-crossing(s). The up-arcs that are free of crossings can be labeled either $o$ or $u$.   

\subsubsection{Braidoiding moves}
The basic idea of turning $K$ into a labeled braidoid diagram is to keep the arcs of $K$ that are oriented downward, with respect to the top-to-bottom direction, and to eliminate its up-arcs by turning them into braidoid strands. 
The elimination of the up-arcs is done by utilizing the sequence of moves illustrated in Figure ~\ref{fig:brdngone}. Precisely, a \textit{braidoiding move} consists of cutting an up-arc at a point and pulling the resulting sub-arcs to top and bottom lines {\it entirely over} or {\it entirely under} the rest of the diagram by preserving the alignment with the cut-point. Finally we slide the resulting sub-arcs down and up, respectively, across local triangular regions in order to eliminate the upward oriented pieces. An up-arc is eventually turned into a braidoid strand as also depicted in Figure \ref{fig:brdngone}. It can also be verified by Figure \ref{fig:brdngone} that the resulting ends obtained by cutting the up-arc $QP$ at a point are pulled entirely over the rest of the diagram and received the label $o$, and when we join the resulting pair of corresponding ends with an over-passing arc, we obtain a closed strand that is isotopic to the initial up-arc $QP$. 
\begin{figure}[H]
\centering
\scalebox{.55}{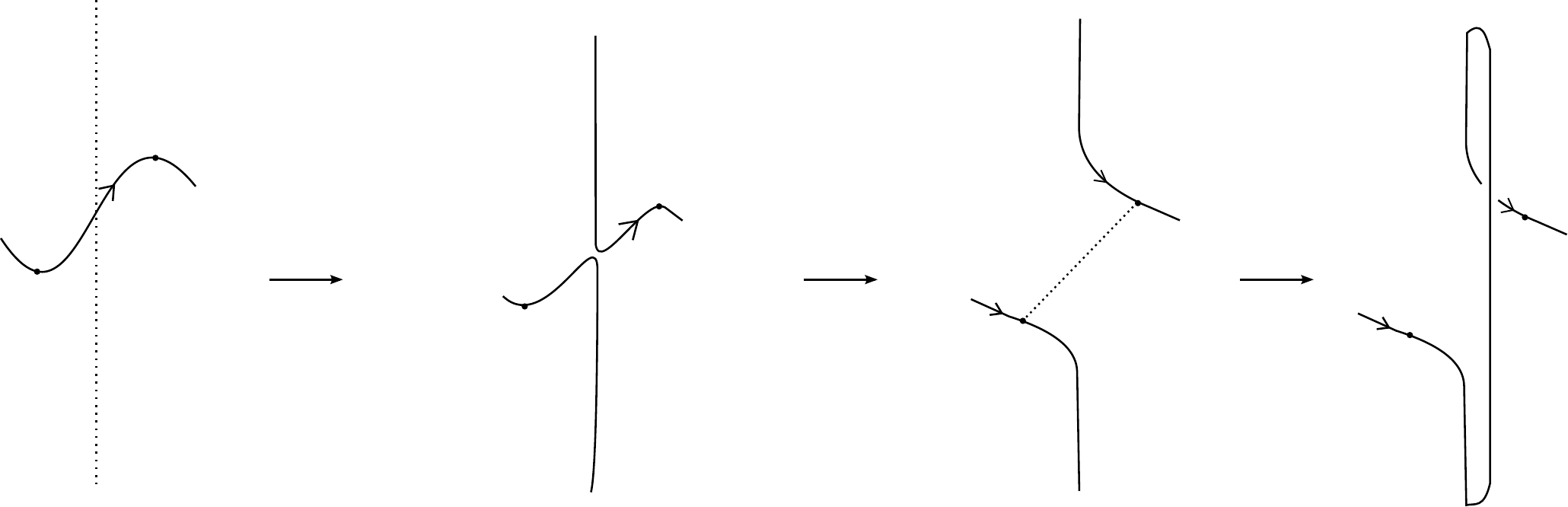}
\caption{A braidoiding move}
\label{fig:brdngone}
\end{figure}
\subsubsection{Cut-points and sliding triangles} 
Let $QP$ be an up-arc of $K$ with respect to a given subdivision of $K$ where $Q$ denotes the initial and $P$ denotes the top-most subdivision point. 

The right angled triangle which lies below $QP$ and admits $QP$ as hypotenuse, and is a special case of a triangle enabling a $\Delta$-move is called the \textit{sliding triangle} of the up-arc $QP$. We denote the sliding triangle by $T(P)$, see Figure \ref{fig:slid}.
The disk bounded by the sliding triangle $T(P)$ is utilized after cutting $QP$, for sliding down the resulting lower sub-arc across it. See Figure \ref{fig:slid}.   	
\begin{figure}[H]
\centering
\includegraphics[width=.75\columnwidth]{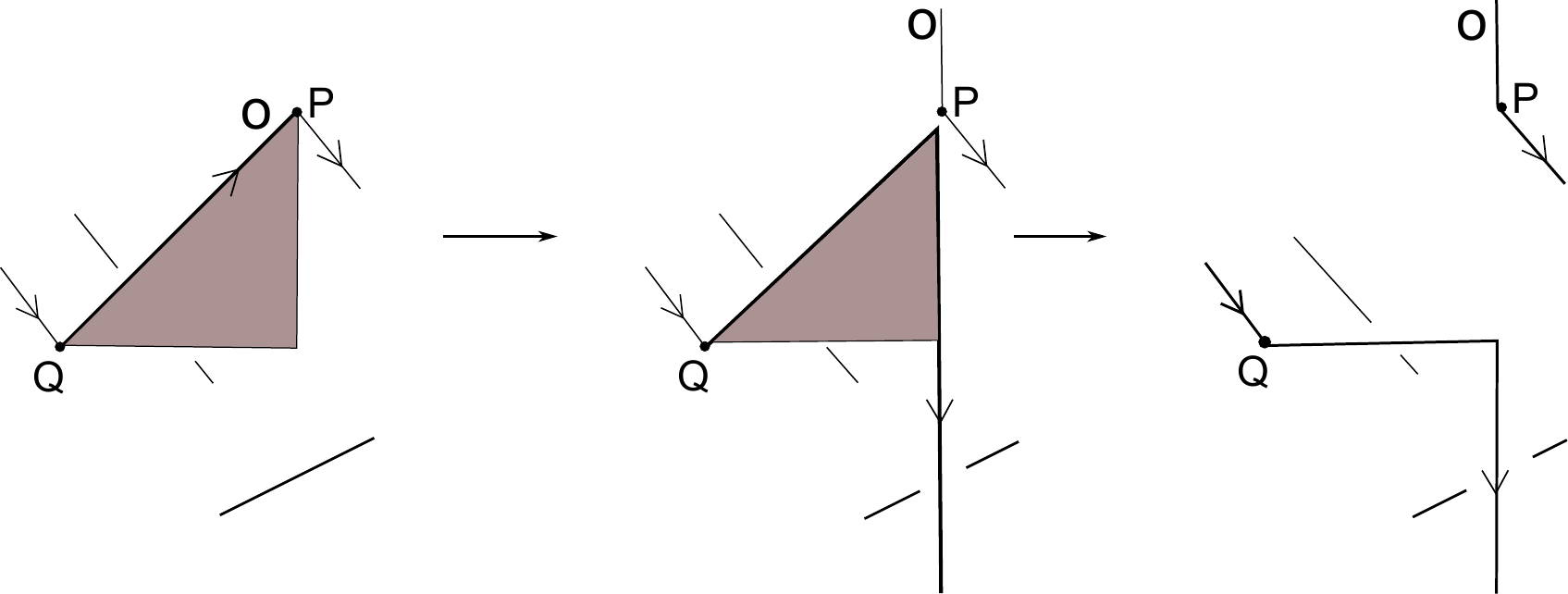}
\caption{The sliding triangle of the up-arc $QP$ with $P$ the cut point}
\label{fig:slid}
\end{figure}
A \textit{cut-point} of an up-arc is defined to be the point where the up-arc is cut to start a braidoiding move. We pick the top-most point $P \in QP$ as the cut-point of $QP$ for our algorithm. 

\subsubsection{A condition on sliding triangles} 
One may come across the  situation where one (or two) of the endpoints lies in the region bounded by some sliding triangle as in Figure \ref{fig:endptcond}. This is an unwanted situation since it would cause a forbidden move when the knotoid strand obtained by closing the resulting braidoid strand is tried to be isotoped back to the initial up-arc. We impose the following condition on knotoid diagrams to avoid this.\\

 \textit{The endpoint triangle condition} A sliding triangle of a knotoid diagram is not allowed to contain an endpoint.\\

To satisfy this condition we introduce a subdivision of up-arcs into smaller sub-arcs by adding extra subdividing points. See Figure \ref{fig:endptcond}. More precisely, we have the following proposition. 
\begin{lemma}
Let $K$ be a knotoid diagram and $T(P)$ 
be the sliding triangle corresponding to an up-arc $QP$.
If $T(P)$ contains the leg or the head of $K$ in its interior or boundary then there is a further subdivision of $QP$ admitting new sliding triangles whose disks are disjoint from the endpoint.
\end{lemma}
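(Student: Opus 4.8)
The plan is to exploit the fact that a sliding triangle is no wider than its hypotenuse is long: cutting a long up-arc into many short sub-arcs then replaces the single ``fat'' triangle $T(P)$ by a chain of thin triangles hugging the arc $QP$, none of which can reach an endpoint lying at positive distance from $QP$. To set this up, let $e$ be the endpoint (the leg or the head of $K$) contained in the disk bounded by $T(P)$; if $T(P)$ contains both endpoints, run the argument for each. We may assume that $e$ does not lie on $QP$: if it did, $e$ would lie on the hypotenuse of a new sliding triangle however $QP$ is subdivided, so the statement would fail --- but this case is prevented at an earlier stage of the algorithm, where one arranges that the up-arcs to which the braidoiding move is applied are disjoint from the endpoints. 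Hence $\delta := d(e, QP) > 0$, since $QP$ is compact; when two endpoints are present we take $\delta$ to be the smaller of the two distances.

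Next I would produce the refinement. Since $QP$ is a compact piecewise-linear arc, choose a subdivision $Q = R_{0}, R_{1}, \dots, R_{k} = P$ by points of $QP$, ordered along the arc, fine enough that every sub-arc $R_{i-1}R_{i}$ has diameter strictly less than $\delta$; subdividing further at the corners of $QP$ if necessary, we may moreover take each $R_{i-1}R_{i}$ to be a single segment, so that $T(R_{i})$ is a genuine sliding triangle in the sense of Section~\ref{sec:subdivi}, with hypotenuse $R_{i-1}R_{i}$. Each sub-arc remains oriented upward and inherits the crossing-type label of $QP$ (or stays free if $QP$ was free), so that $T(R_{1}), \dots, T(R_{k})$ is a legitimate system of sliding triangles for the refined subdivision. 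The estimate is then immediate: in the right triangle $T(R_{i})$ the hypotenuse is the longest side, whence $\operatorname{diam} T(R_{i}) = |R_{i-1}R_{i}| < \delta$; and since the vertex $R_{i-1}$ of $T(R_{i})$ lies on $QP$, every point of the disk bounded by $T(R_{i})$ lies at distance less than $\delta$ from $QP$. As $d(e, QP) = \delta$, the endpoint $e$ lies outside that disk for every $i$, and the same inequality with the minimum distance disposes of the case in which $T(P)$ contains both endpoints. This is precisely the subdivision pictured in Figure~\ref{fig:endptcond}.

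There is no serious obstacle here: the geometric content is just the identity $\operatorname{diam} T(R_{i}) = |R_{i-1}R_{i}|$ together with ``refine until the pieces are short''. The one point that needs care is the degenerate configuration $e \in QP$, which genuinely cannot be repaired by subdividing $QP$ and must instead be ruled out at the level of the algorithm; I would state this exclusion explicitly rather than bury it. Two small bookkeeping remarks finish the job: first, the refined triangles $T(R_{i})$ are still admissible for $\Delta$-moves --- each may be routed entirely over or entirely under the arcs it meets --- which is automatic since $T(R_{i})$ carries the label of $QP$; and second, if one also wants the new triangles to lie inside $T(P)$ (so that endpoints \emph{outside} $T(P)$ are avoided with no extra work), this follows from a one-line convexity check once $QP$ is straightened, since the three vertices of each $T(R_{i})$ then lie in $T(P)$, whence $T(R_{i}) \subseteq T(P)$.
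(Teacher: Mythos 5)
Your proof is correct, but it takes a genuinely different route from the paper's. You run a metric refinement argument: set $\delta = d(e,QP)>0$, chop $QP$ into sub-arcs of diameter less than $\delta$, and observe that each new sliding triangle, being a right triangle with its hypotenuse as longest side and with a vertex on $QP$, is entirely contained in the $\delta$-neighbourhood of $QP$ and so misses $e$. The paper instead makes a single, explicit choice: it takes the horizontal and the vertical line through the endpoint, notes that each meets $QP$ in exactly one point (precisely because $e$ lies in the right triangle with hypotenuse $QP$), and inserts one new subdividing point $P^*$ in the interior of the sub-segment of $QP$ between these two feet; the two resulting sliding triangles then avoid $e$ for coordinate reasons (one lies strictly to one side of the vertical line through $e$, the other strictly to one side of the horizontal line). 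The paper's construction is more economical -- one added point per enclosed endpoint, so the subdivision stays minimal -- while yours is more robust and is essentially the same uniform-fineness argument the paper uses later for the classical triangle condition (Lemma \ref{lem:triii}), so it handles both triangle conditions at once. Two of your side remarks are worth keeping: the degenerate case $e\in QP$ genuinely defeats any subdivision and must be excluded by the general-position requirement that the endpoints lie on down-arcs (the paper's construction also silently relies on this, since the two feet would then coincide and leave no interior segment in which to choose $P^*$); and your containment $T(R_i)\subseteq T(P)$ is a pleasant bonus the paper's smaller triangles also enjoy, though neither argument needs it.
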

\begin{proof}
 We can assume that $QP$ lies in the first quadrant of the plane and has a positive slope without loss of generality.
 There is a unique horizontal and vertical line passing through the endpoint in question and each intersecting $QP$ exactly at one point, since $x_Q \leq x_{endpoint} \leq x_P$ and $t_P \leq t_{endpoint} \leq t_Q$. We pick an interior point in the small line segment whose boundary is the union of the two intersection points and declare it as a new subdividing point on $QP$. Let $P^*$ denote the chosen point. The sliding triangles intersecting at the point $P^*$ are clearly smaller than the one with the top vertex $P$ and they do not contain the endpoint.
In case that  $T(P)$ contains two of the endpoints, we introduce two new subdividing points on $QP$ each chosen as above and none of the corresponding sliding triangles contains the endpoints.
\end{proof}  
\begin{figure}[H]
\centering
\includegraphics[width=.6\textwidth]{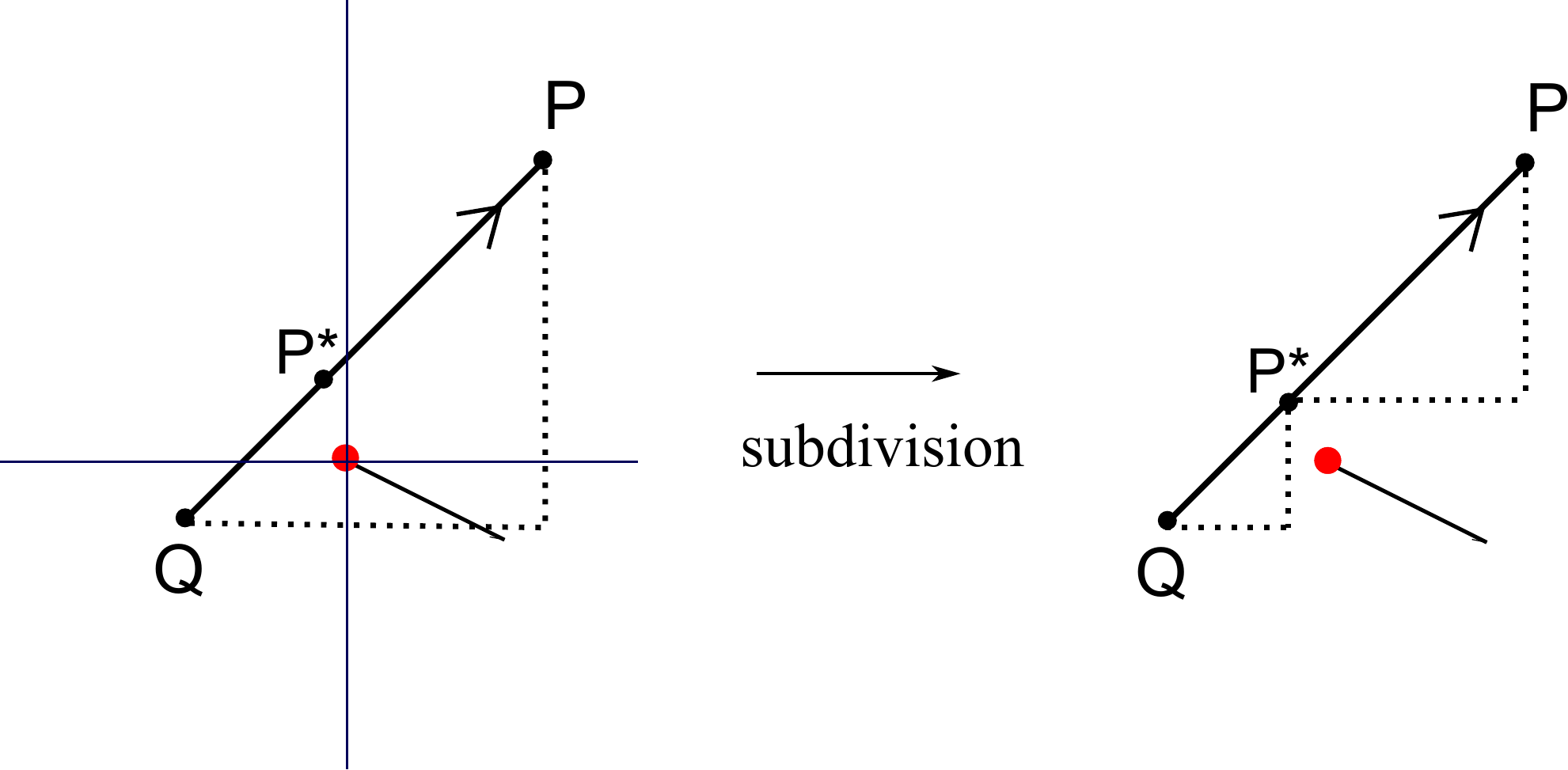}
     \caption{A further subdivision of the up-arc  $QP$}
      \label{fig:endptcond}
\end{figure}
\subsection{Braidoiding algorithm}
We now present our algorithm for obtaining a labeled braidoid diagram from a (multi)-knotoid diagram $K$. The algorithm runs as follows.

{\it Step 1: Preparation for eliminating the up-arcs}
\smallbreak
\begin{enumerate}
\item The diagram is marked with subdividing points as explained in Section \ref{sec:subdivi} so that the endpoint triangle condition is satisfied. It is furthermore assumed to satisfy some general positioning conditions, which can be ensured by `small' isotopy moves, namely
\item no arcs is vertical or horizontal,
\item no subdividing points are vertically aligned with each other unless they share a common edge and neither with the endpoints or with any of the crossings,
\item the endpoints of $K$ are assumed to lie on arcs that are directed downward.
\end{enumerate}
\vspace{3pt}
{\it Step 2: Applying the braidoiding moves}
\smallbreak
 We order the up-arc and finally apply the braidoiding moves to each up-arc of $K$ in the given order. 
\smallbreak
Figure \ref{fig:algo} illustrates for the braidoiding algorithm with a concrete example.
\begin{figure}[H]
\centering
\includegraphics[width=1\textwidth]{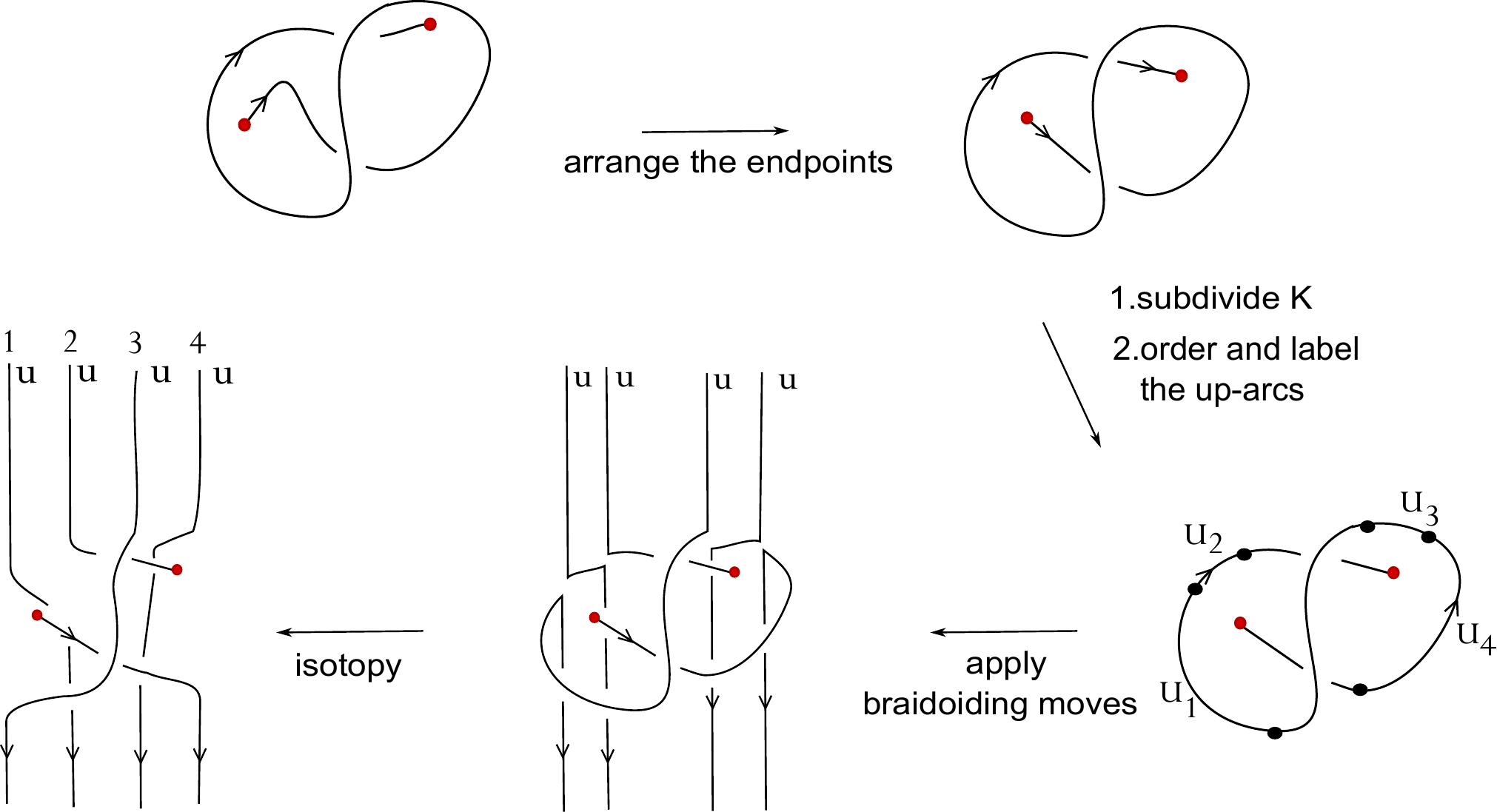}
\caption{An illustration for the algorithm}
\label{fig:algo}
\end{figure}

\subsubsection{Obstructions for the braidoiding algorithm and resolutions}
Now, we discuss some bugs of the braidoiding algorithm. In some cases, as exemplified in Figure \ref{fig:casetwo}, the algorithm is obstructed by a clasp occuring in the sliding triangle of an up-arc. We see in the figure that the braidoiding move applied on the second-ordered up-arcs labeled with $o$ and $u$ respectively, cannot be completed, due to the clasps in their sliding triangles. It can be verified by checking all possible positionings, labelings and orderings of any two up-arcs that this type of obstruction may occur only if:
\begin{itemize}
\item the top-most point of the first-ordered up-arc intersects the sliding triangle of the second-ordered up-arc and,\\
\item two up-arcs with intersecting sliding triangles, are labeled the same. 
\end{itemize}

\begin{figure}[H]
\centering
\includegraphics[width=.7\columnwidth]{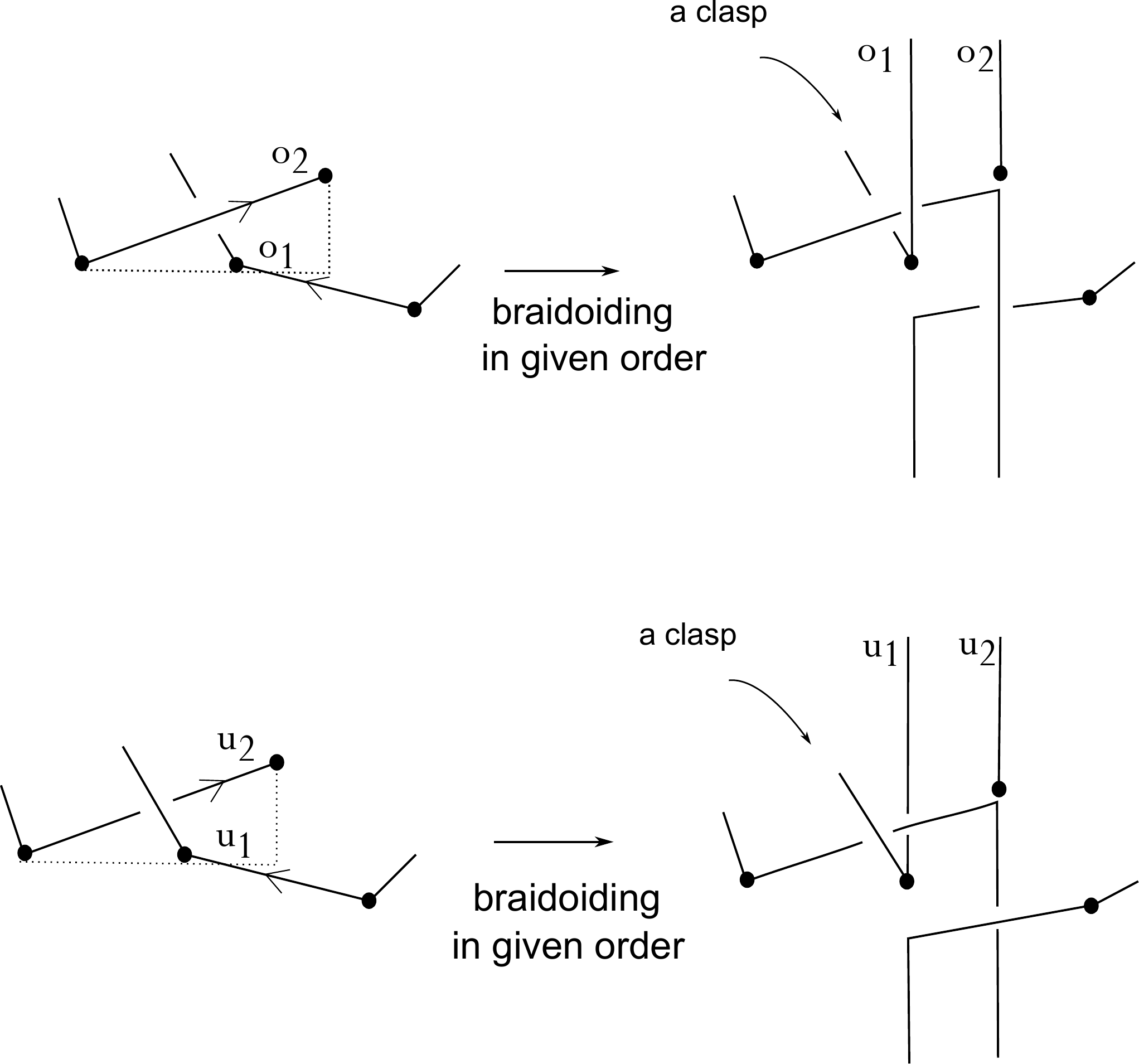}
\caption{Obstructions for applying  braidoiding moves}
\label{fig:casetwo}
\end{figure}
\subsubsection{Resolutions of the obstructions}
Due to the conditions creating obstructions, the resolutions for them can be:
\begin{itemize}
\smallbreak 
\item swapping the ordering of the up-arcs; see Figure \ref{fig:repair},\\
\item changing the label of the free up-arc if there is a free up-arc involved in an obstruction; see Figure \ref{fig:repairtwo},\\
\item subdividing the up-arcs further to have disjoint sliding triangles; see Figure \ref{fig:repairthree}.
\end{itemize}
\begin{figure}[H]
\centering 
\includegraphics[width=.6\columnwidth]{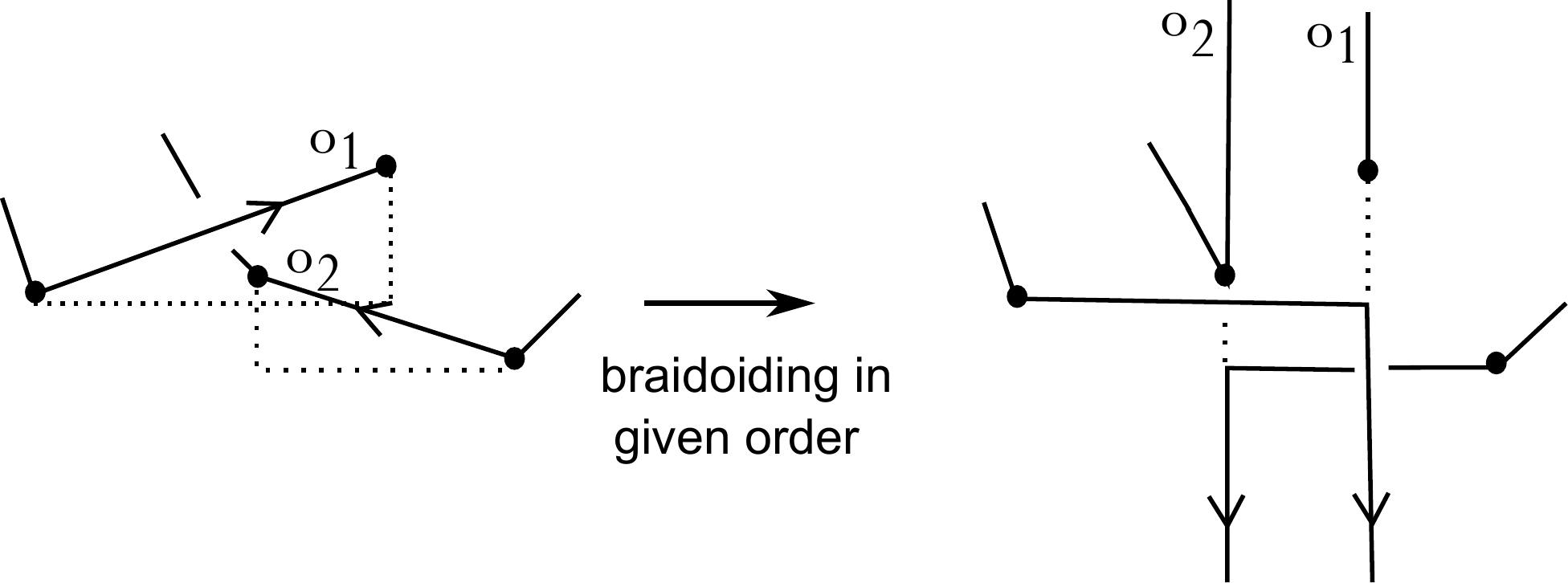}
\caption{Swapping the order of up-arcs repairs the obstruction}
\label{fig:repair}
\end{figure}
\begin{figure}[H]
\centering 
\includegraphics[width=.9\columnwidth]{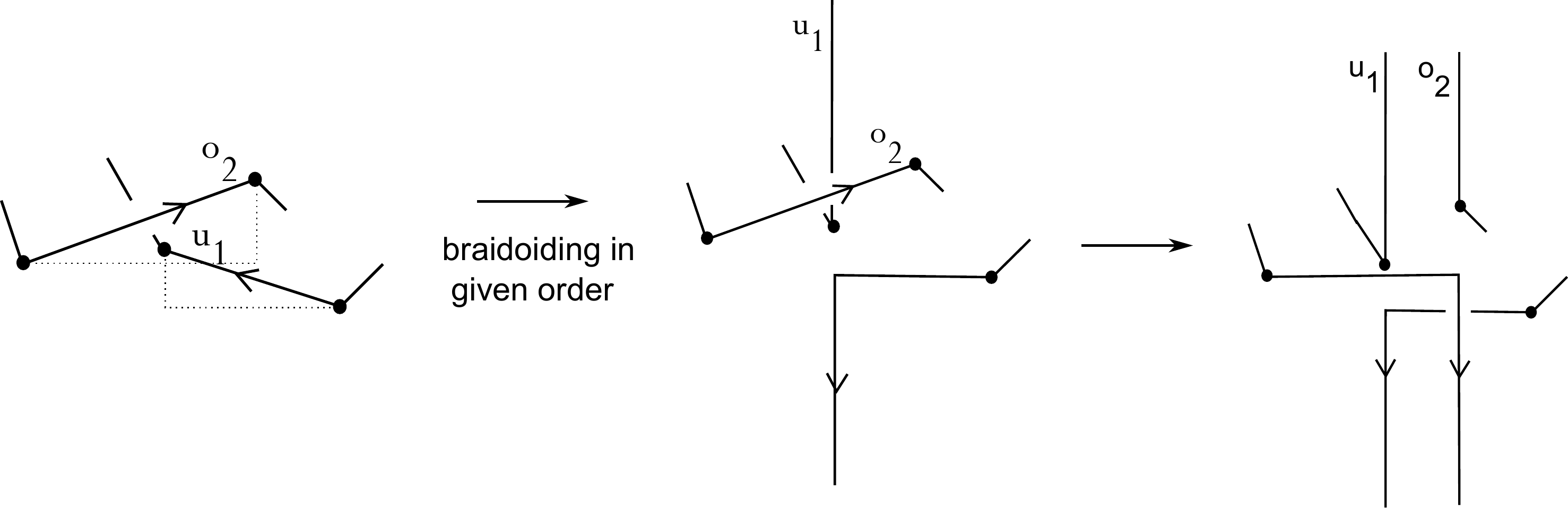}
\caption{Changing the label of the free up-arc repairs the obstruction}
\label{fig:repairtwo}
\end{figure}
\begin{figure}[H]
\centering 
\includegraphics[width=.75\columnwidth]{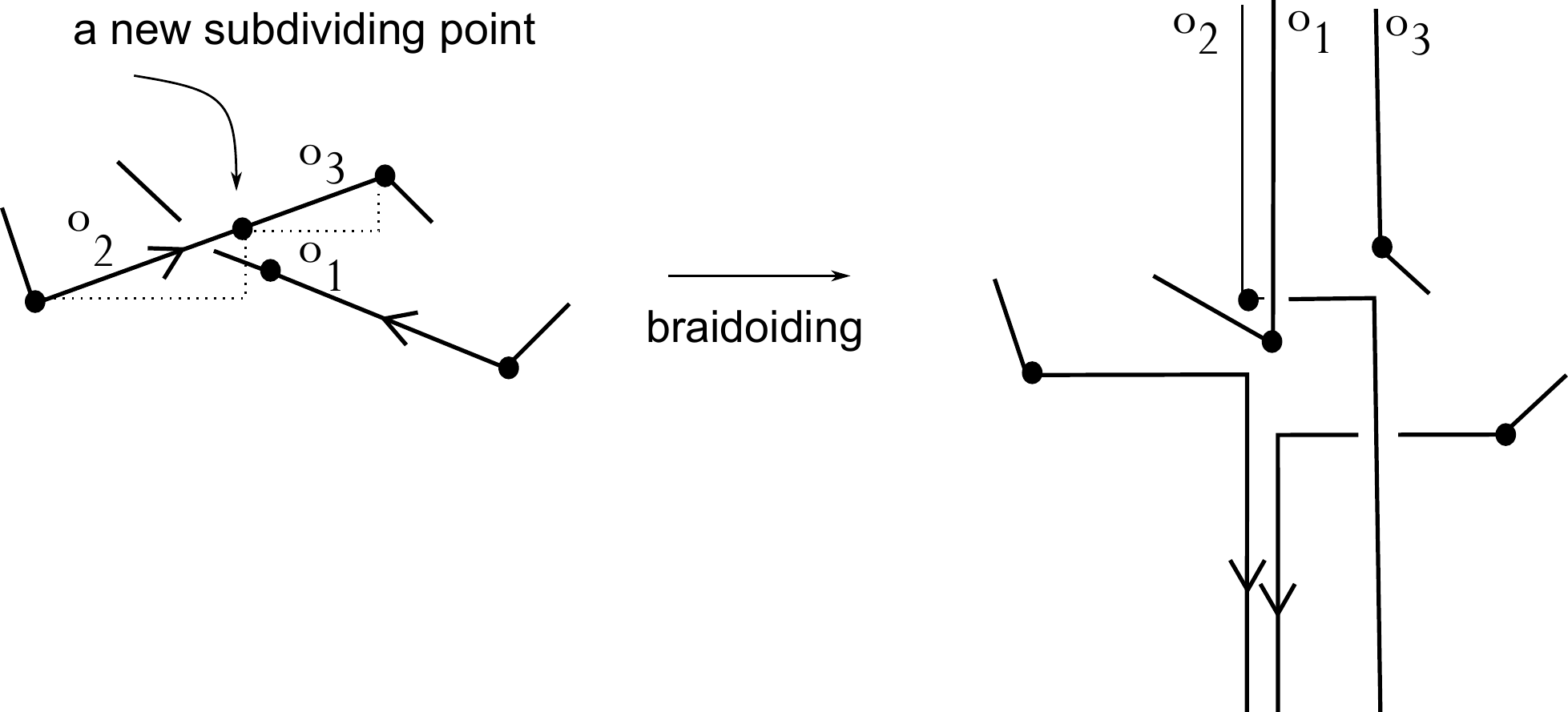}
\caption{Adding a subdividing point to the upper up-arc repairs the obstruction}
\label{fig:repairthree}
\end{figure}

\subsubsection{The classical triangle condition}
We can make the braidoiding algorithm a simultaneous algorithm that is processed independently of the ordering of the up-arcs, by imposing the following condition.
\begin{definition}\normalfont
Two sliding triangles are said to be \textit{adjacent} if the corresponding up-arcs have a common subdidiving point, and \textit{non-adjacent} otherwise.
\end{definition}
\smallbreak
The {\it classical triangle condition} says that non-adjacent sliding triangles are allowed to intersect only if the up-arcs of the triangles have different labels.
 The classical triangle condition can always be satisfied by the following lemma.
 
\begin{figure}[H]
\centering\includegraphics[width=.5\columnwidth]{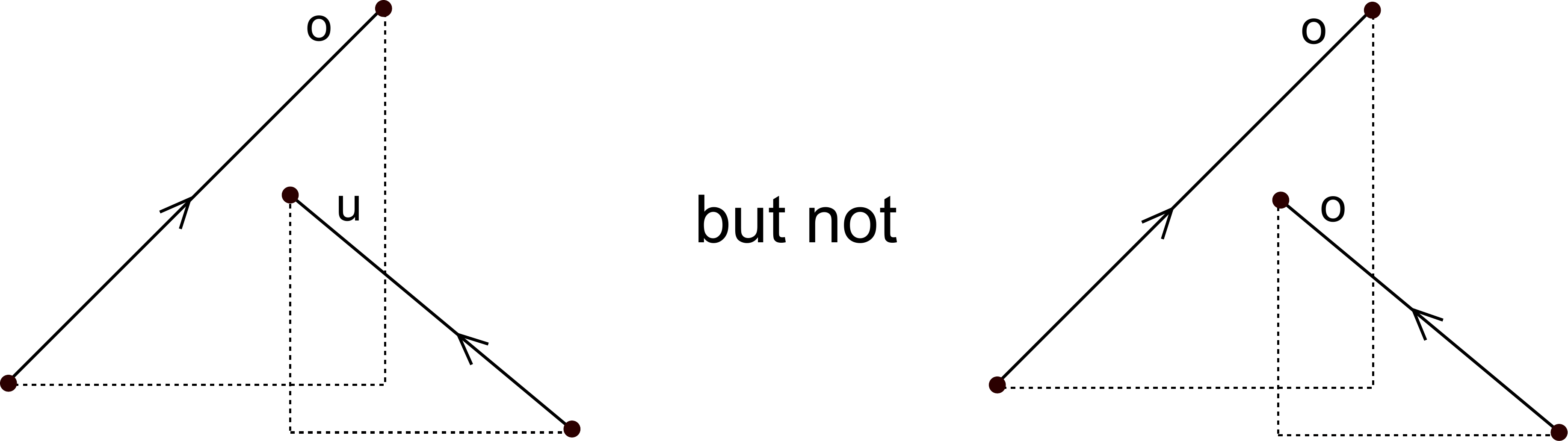}
\caption{The classical triangle condition}
\label{fig:clascond}
\end{figure}

\begin{lemma}\label{lem:triii}
Let $K$ be a knotoid diagram. There exists a subdivision of $K$ satisfying both the classical triangle condition and the endpoint triangle condition.
\end{lemma}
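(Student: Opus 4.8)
The plan is to satisfy both conditions by a single subdivision obtained through successive refinements, resting on two principles: refining a subdivision can only shrink the sliding triangles (pushing them toward their up-arcs), and two distinct up-arcs carrying the same label are disjoint as subsets of the plane. I would begin by fixing a subdivision $\mathcal{D}_0$ of $K$ as in Section~\ref{sec:subdivi}: mark all local maxima and minima, add finitely many further subdividing points until every up-arc is monochromatic, and label each up-arc $o$ or $u$ according to its crossing type (a free up-arc labelled $o$, say), while also imposing the general-position conditions of Step~1 of the algorithm. After this, each up-arc is a simple arc monotone in $t$ (indeed we may take it to be a straight segment by a further subdivision if convenient), carrying a well-defined sliding triangle as in Figure~\ref{fig:slid}.

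The observation I would isolate first is that two distinct up-arcs of $\mathcal{D}_0$ with the same label are disjoint. An up-arc is monotone in $t$, hence simple, so it has no self-crossings; two distinct up-arcs can meet only at a crossing of $K$ or at a common subdividing point; at a crossing one strand is the over-strand and the other the under-strand, so up-arcs meeting there carry opposite labels; and no subdividing point lies on two distinct up-arcs (a peak or valley joins one up-arc to one down-arc, and the auxiliary subdividing points lie in the interior of a single up-arc). Hence equal labels force disjointness.

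For the classical triangle condition, let $\varepsilon>0$ be the minimum distance between two distinct same-label up-arcs of $\mathcal{D}_0$; this is positive, being a finite minimum of distances between disjoint compacta. Subdivide every up-arc into sub-arcs small enough that the sliding triangle of each sub-arc lies within distance $\varepsilon/3$ of the up-arc containing it (possible since a sliding triangle lies within a bounded multiple of the diameter of its arc), obtaining $\mathcal{D}_1$. Then sub-arcs of two distinct same-label up-arcs have sliding triangles lying in disjoint $(\varepsilon/3)$-neighbourhoods and so cannot meet; and two non-adjacent sub-arcs of a single up-arc occupy disjoint $t$-intervals, while the sliding triangle of a sub-arc occupies exactly that sub-arc's $t$-interval, so these sliding triangles are disjoint as well; pairs of sub-arcs with different labels are unconstrained. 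Thus $\mathcal{D}_1$ satisfies the classical triangle condition --- this is the ``subdivide to obtain disjoint sliding triangles'' resolution of Figure~\ref{fig:repairthree}, applied uniformly.

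Finitely many sub-arcs of $\mathcal{D}_1$ may still have a sliding triangle containing the leg or the head; to each such up-arc I would apply the lemma proved earlier in this section (the one that refines an up-arc near the horizontal and vertical lines through an endpoint so that the resulting smaller sliding triangles avoid the endpoint), producing $\mathcal{D}_2$, which then satisfies the endpoint triangle condition. The point I expect to be the crux is checking that $\mathcal{D}_2$ still satisfies the classical triangle condition: refining a single up-arc leaves the sliding triangles of all other up-arcs unchanged and replaces the refined arc's sliding triangles by triangles contained in them, so any pair of up-arcs present in $\mathcal{D}_2$ is either, after replacing a triangle by a smaller subtriangle and inheriting labels, a pair already admissible in $\mathcal{D}_1$, or a pair of sub-arcs of one up-arc, handled again by the $t$-interval argument. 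A final small perturbation of the newly added subdividing points restores the general-position conditions without destroying either triangle condition, since disjointness of compact triangles and the exclusion of an endpoint from a closed triangle are preserved under sufficiently small perturbations. Hence $\mathcal{D}_2$ is the desired subdivision, and the main work of the argument is precisely this stability of the classical triangle condition under the endpoint-driven refinement, together with the disjointness observation that makes the first refinement go through.
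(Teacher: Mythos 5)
Your proposal is correct and follows essentially the same route as the paper: both rest on a minimum-distance/fine-mesh argument, the paper choosing a single mesh $\epsilon<\frac{1}{2}\min\{d_1,d_2\}$ built from distances between crossings and between points on different segments so that one uniform subdivision enforces both triangle conditions at once, while you first enforce the classical condition via the distance between same-label up-arcs and then the endpoint condition via the earlier lemma plus a stability check. One small repair: two labeled up-arcs lying on the same monotone stretch can share a subdividing point and carry the same label (e.g.\ a free piece labeled like its crossing-bearing neighbour), so your $\varepsilon$ must be taken only over \emph{non-adjacent} same-label pairs lying on different monotone stretches --- the remaining pairs being exactly those already covered by your disjoint $t$-interval argument.
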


\begin{proof}
This lemma is proved similarly with Lemma 1 in \cite{Lathesis}. For completeness, we adapt the proof here for the knotoid case.
Let $d_1$ be the minimum distance between any two crossings appearing on the up-arcs of $K$. Choose some $r$, $0 < r < d_1$ so that the disk of radius $r$ centered at a crossing contained in an up-arc, intersects the up-arc at only four local strands around the crossing. Let $d_2$ be the minimum distance between any two disjoint points that are located outside the disks of radius $r$ around the crossings and on different segments. Letting $0 < \epsilon < \frac{1}{2}\hspace{0.1pt}min \{d_1, d_2\}$ be the distance between any two subdivision points on $K$ provides a subdivision of $K$ satisfying the classical triangle condition and also the triangle condition for the endpoints. Each sub-arc of length $\epsilon$ is also labeled according to the crossing type it contains, or if some become free, they are labeled freely. 
\end{proof}

\subsubsection{Proof of Theorem \ref{thm:alex}}

 With the discussion above we know that there is always a choice for ordering and labeling of up-arcs of a (multi)-knotoid diagram and, moreover that  we can impose the classical triangle condition to make the algorithm free of ordering. This is equivalent to saying that the braidoiding moves can be done all simultaneously and it is sufficient for ensuring that the algorithm terminates in a finite number of steps and always results in a labeled braidoid diagram. Finally, it is clear from Figure \ref{fig:brdngone} that if we apply closure to the corresponding strands according to their label, then each pair can be contracted back to the initial up-arc. Such contractions utilize the isotopy moves for knotoids, thus the closure of the resulting braidoid diagram is isotopic to the (multi)-knotoid diagram we started with. 
\hfill{QED}

\begin{remark}\label{alg}\normalfont
In \cite{GL1} we present another braidoiding algorithm which is also induced by the same braidoiding moves. The main difference of that algorithm from the one we describe in this paper is that it firstly eliminates each up-arc of a knotoid diagram that contains a crossing by a 90-degree or 180-degree rotation of the crossing. The braidoiding moves are applied to the resulting up-arcs that are all free up-arcs. Furthermore, since the labels of the free up-arcs are not forced they can be labeled only {\it u}. Applying the braidoiding algorithm on such a knotoid diagram generates a labeled braidoid diagram whose strands are all labeled $u$. This induces the idea of closing braidoid diagrams in a uniform way, that is, by using only underpassing arcs that run in arbitrarily close distance to the vertical lines of pairs of corresponding ends, for connecting them. We call this closure the \textit{uniform closure}. 
\end{remark}
  
	The above remark leads to the following result.
	
\begin{theorem}\cite{Gthesis,GL1}
Any multi-knotoid diagram in $\mathbb{R}^2$ is isotopic to the uniform closure of a braidoid diagram.

\end{theorem}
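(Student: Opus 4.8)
The plan is to deduce this theorem from Theorem \ref{thm:alex} (equivalently, from the braidoiding algorithm) together with the crossing-elimination trick recalled in Remark \ref{alg}. Start with a multi-knotoid diagram $K$ in $\mathbb{R}^2$ and put it in the general position required by Step 1 of the braidoiding algorithm: no horizontal or vertical arcs, the endpoints lying on down-arcs, and the usual subdividing data. The aim is to produce, isotopic to $K$, a braidoid diagram all of whose braidoiding arcs may be taken to run under the rest of the diagram; equivalently, a labeled braidoid diagram in which every pair of corresponding ends carries the label $u$, so that its closure is by definition the uniform closure of the underlying (unlabeled) braidoid.

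The first step I would carry out is the crossing-reduction. For every up-arc of $K$ that contains a crossing, apply a $90^{\circ}$ or $180^{\circ}$ rotation of a small disk containing that crossing -- a planar isotopy, hence a knotoid isotopy, as in \cite{GL1} -- so that the crossing comes to lie on a down-arc. Since $K$ is generic with finitely many crossings, and since the endpoints lie on down-arcs and stay away from the rotation disks, this can be iterated; after finitely many rotations one obtains a multi-knotoid diagram $K'$, isotopic to $K$, in which every up-arc is a free up-arc. The point requiring care -- and what I expect to be the main obstacle -- is to verify that each rotation genuinely removes a crossing from the up-arcs without reintroducing crossings on other up-arcs, so that the procedure is well-founded and terminates, and that no new conflict is created near the endpoints or the sliding triangles; this is exactly the content of the first algorithm of \cite{GL1} recalled in Remark \ref{alg}, and it is checked by inspecting the local pictures (compare Figure \ref{fig:brdngone}).

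Next I would run the braidoiding algorithm of Section \ref{sec:braiding} on $K'$. Because every up-arc of $K'$ is free, the labelling step is unconstrained and we may assign the label $u$ to every up-arc. By Lemma \ref{lem:triii} we may choose a subdivision of $K'$ satisfying both the classical triangle condition and the endpoint triangle condition, so that, as in the proof of Theorem \ref{thm:alex}, the braidoiding moves can be performed simultaneously, independently of any ordering, terminating in finitely many steps and yielding a labeled braidoid diagram $B$ all of whose corresponding ends are labeled $u$.

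Finally, the closure $\widehat{B}$ joins each pair of corresponding ends of $B$ by an underpassing arc running arbitrarily close to the vertical line through those ends; this is precisely the uniform closure of the braidoid diagram obtained from $B$ by forgetting the labels. By the contraction argument used in the proof of Theorem \ref{thm:alex} -- each closed-up pair of strands in $\widehat{B}$ can be pulled back, through knotoid isotopy moves, to the up-arc of $K'$ it replaced -- the diagram $\widehat{B}$ is isotopic to $K'$, and hence to $K$. Therefore $K$ is isotopic to the uniform closure of a braidoid diagram, which is what was to be shown.
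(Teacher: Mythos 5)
Your proposal is correct and follows essentially the same route the paper intends: the theorem is stated there as a direct consequence of Remark \ref{alg}, i.e., first rotate the crossings off the up-arcs (the first algorithm of \cite{GL1}) so that all up-arcs become free, then run the braidoiding algorithm with every free up-arc labeled $u$, and observe that the resulting all-$u$ labeled closure is exactly the uniform closure. You correctly flag the one point the paper also defers to \cite{Gthesis,GL1} --- that the rotation step terminates without reintroducing crossings on up-arcs --- so there is nothing substantive to add.
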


\section{$L$-equivalence of braidoid diagrams}\label{sec:L-eqv.}

The $L$-moves were originally defined for classical braid diagrams \cite{La1, Lathesis,LR1} by the second listed author, and they were used for proving a one-move analogue of the classical two-move Markov theorem~\cite{Ma, Wei, Bir, Ben, Mo, Tr, BM}. In the sequel, we adapt the $L$-moves for braidoid diagrams and use them for formulating a geometric analogue of the classical Markov theorem.
\subsection{The $L$-moves}
\begin{definition} \normalfont 
An {\it $L$-move} on a labeled braidoid diagram $B$ is the following operation:
\begin{enumerate}
\item Cut a strand of $B$ at an interior point which is not vertically aligned with a braidoid end, an endpoint or a crossing of $B$. The existence of such point can be ensured by a general positioning argument.

\item Pull the resulting ends away from the cut-point to top and bottom respectively, keeping them vertically aligned with the cut-point, so as to create a new pair of braidoid strands with corresponding ends. The new strands run both entirely {it over} or {\it under} the rest of the braidoid diagram depending on the type of the $L$-move applied. 
There are two types of $L$-moves, namely $L_{over}$ and $L_{under}$-moves, denoted by $L_o$ and $L_u$ respectively. An $L_o$-move comprises pulling the resulting sub-strands entirely over the rest of the diagram. An $L_u$-move comprises pulling the sub-strands entirely under the rest of the diagram. See  top row of Figure~\ref{fig:alfa}.
\item  After an $L$-move applied on a labeled braidoid diagram, the new pair of corresponding strands gets the labeling of the $L$-move: If the strands are obtained by an $L_o$-move then they are labeled $o$ and if they are obtained by an $L_u$-move then they are labeled $u$. Then, as can be verified by Figure~\ref{fig:alfa}, the closure of a pair of labeled braidoid strands resulting from an $L$-move is isotopic to the initial arc.
\end{enumerate}
\end{definition}



\begin{figure}[H]
\centering\scalebox{.8}{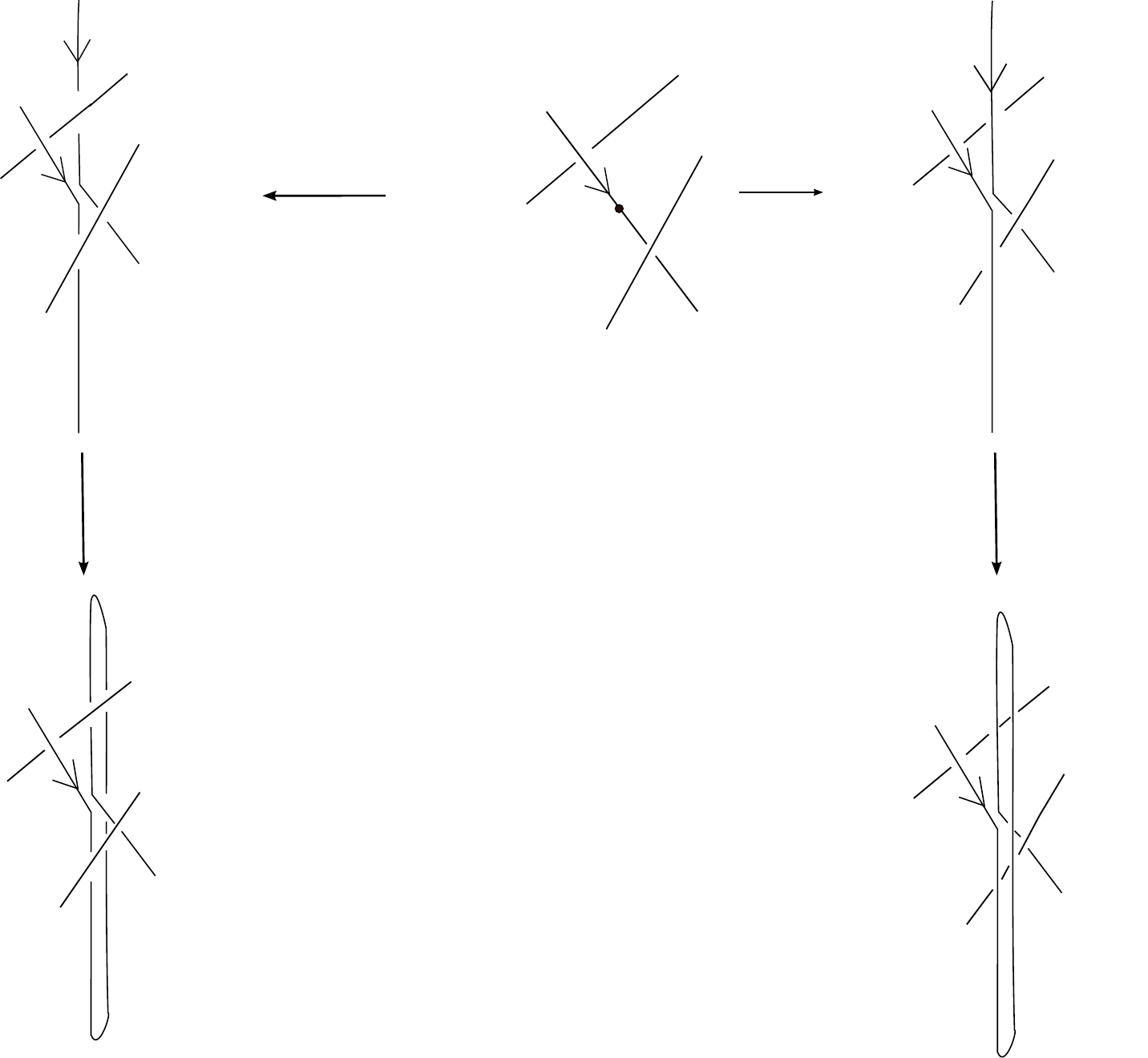}
\caption{$L$-moves and the closures of the resulting strands}
\label{fig:alfa}
\end{figure}

In order to formulate a braidoid equivalence in analogy to the classical braid equivalence utilized in Markov theorem we also need to discuss the fake forbidden moves.
\begin{definition}\label{defn:fake}\normalfont
We define a {\it fake forbidden move} on a labeled braidoid diagram $B$ to be a forbidden move on $B$ which upon closure induces a sequence of fake forbidden moves on the resulting (multi-)knotoid diagram. A {\it fake swing move} is a swing move which is not restricted, in the sense that the endpoint surpasses the vertical line of a pair of corresponding ends but in the closure it gives rise to a sequence of swing and fake forbidden moves on the resulting (multi-)knotoid diagram. See Figure \ref{fig:fakee} for an example of a fake swing move and a fake forbidden move on a labeled braidoid diagram.
\begin{figure}[H]
\centering\includegraphics[width=.85\columnwidth]{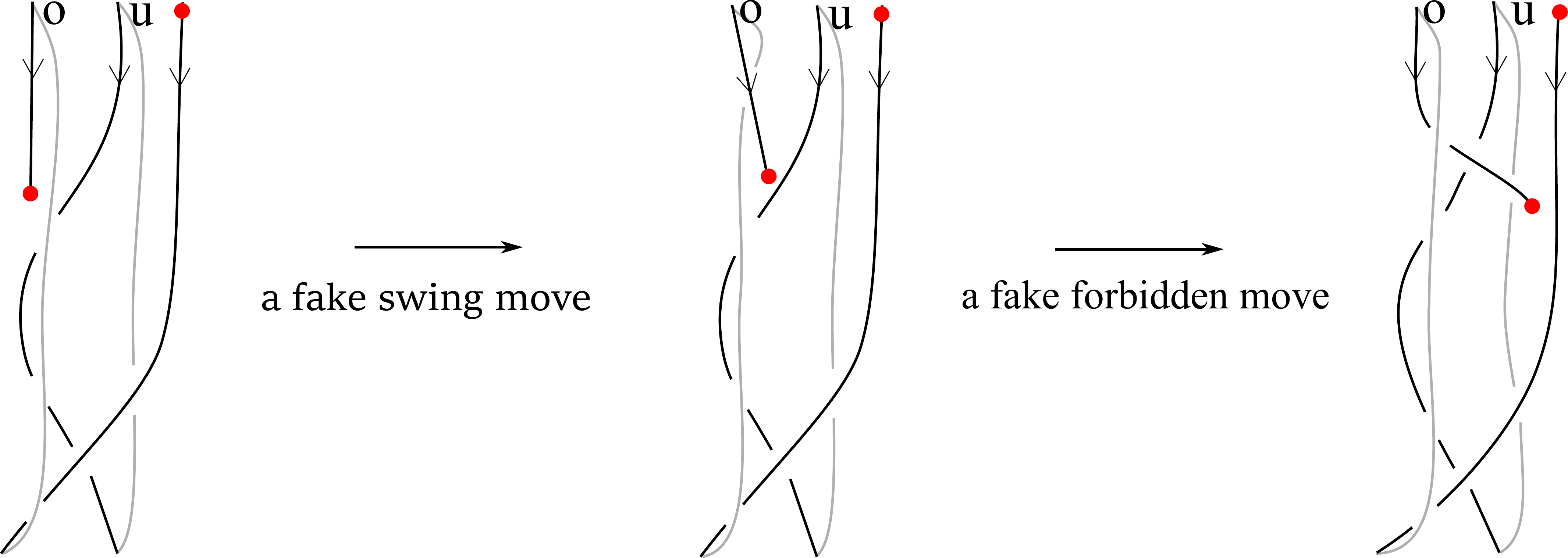}
\caption{A fake swing and a fake forbidden move}
\label{fig:fakee}
\end{figure}
\end{definition}
\begin{definition}\normalfont
The $L$-moves together with labeled braidoid isotopy moves and fake swing moves generate an equivalence relation on labeled braidoid diagrams that is called the {\it $L$-equivalence}.
 The $L$-equivalence is denoted by  $\sim_{L}$.
\end{definition}

 The labeled braidoid that is associated to a knotoid $K$ via a braidoiding algorithm, is not unique up to the braidoid isotopy. Precisely the labeled braidoid depends on the choices made for bringing the knotoid diagram to satisfy the general position requirements before starting the braidoiding algorithm, such as: arrangement of the endpoints, subdivision chosen on the arcs of the knotoid diagram and the labeling of the free up-arcs. Moreover, the knotoid diagram is subject to knotoid isotopy moves. We shall show that the labeled braidoid diagrams that are associated to $K$, are independent of algorithmic choices and isotopy moves up to $L$-equivalence.

Before stating our theorem we give the following definition.
\begin{definition}\normalfont
 A (multi)-knotoid diagram $K$ in the $xt$-plane, with a subdivision and labeling on its up-arcs, is said to be in \textit{general position} if 
\begin{itemize}
\item it has no vertical or horizontal arcs,
\item no subdividing point or endpoint is vertically aligned with a subdividing point, a crossing or an endpoint,
\item the arcs adjacent to the endpoints of $K$ are down-arcs,
\item no sliding triangle encloses an endpoint,
\item sliding triangles satisfy the classical triangle condition.
\end{itemize}
\end{definition}
 Note that a (multi)-knotoid diagram can be always brought to general position by small $\Delta$-moves. 

\begin{theorem} \cite{Gthesis}{(An analogue of the Markov theorem for braidoids)}{\label{thm:markov}}
The closures of two labeled braidoid diagrams are isotopic (multi)-knotoids in $\mathbb{R}^2$ if and only if the labeled braidoid diagrams are related to each other via  $L$-equivalence moves.
\end{theorem}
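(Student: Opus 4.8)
The plan is to prove the two directions of the ``if and only if'' separately, modeling the argument on the classical Markov theorem proof via $L$-moves in \cite{Lathesis,LR1}, but carefully tracking endpoints and labels throughout. The easy direction is that $L$-equivalent labeled braidoid diagrams have isotopic closures. For this, it suffices to check move by move: labeled braidoid isotopy moves already give isotopic closures by the Lemma in Section~\ref{sec:closure}; for an $L_o$- or $L_u$-move, the defining property (established in Figure~\ref{fig:alfa}) is precisely that the closure of the newly created pair of labeled strands is isotopic, via knotoid isotopy, to the arc that was cut, so the closure of the whole diagram is unchanged up to knotoid isotopy; and a fake swing move or fake forbidden move induces, by Definition~\ref{defn:fake}, a sequence of swing and fake forbidden moves on the closure, all of which are knotoid isotopy moves (the fake forbidden moves being compositions of $\Omega_1$/$\Omega_2$ with planar isotopy, as in the Note in Section~\ref{sec:knomo}). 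Hence in all cases $\widehat{B_1}$ and $\widehat{B_2}$ are isotopic multi-knotoids.

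For the hard direction, suppose $\widehat{B_1}$ and $\widehat{B_2}$ are isotopic multi-knotoids; I must produce an $L$-equivalence between $B_1$ and $B_2$. The strategy has two layers. First, \emph{well-definedness of the braidoiding algorithm up to $L$-equivalence}: I would show that if $K$ is a fixed multi-knotoid diagram in general position, then any two labeled braidoid diagrams obtained from $K$ by the algorithm of Section~\ref{sec:braiding} — differing in the choice of subdivision points, the labeling of free up-arcs, the ordering of the up-arcs, and the resolutions of the obstructions — are $L$-equivalent. Changing the order of up-arcs with disjoint sliding triangles is absorbed by braidoid isotopy; the obstruction resolutions (reordering, relabeling a free up-arc, finer subdivision) need to be shown to change the output only by $L$-moves and fake moves; refining a subdivision by adding a subdividing point in the interior of an up-arc corresponds, on the braidoid side, to introducing an extra $L$-move of the appropriate type at the corresponding level; and changing the label of a free up-arc from $o$ to $u$ changes an $L_o$ into an $L_u$ at that arc, which I claim is realized by a fake forbidden / fake swing move composed with isotopy on the closure side, hence by the $L$-equivalence. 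Second, \emph{invariance under knotoid isotopy of $K$}: it suffices to check that each generating knotoid move on $K$ — $\Omega_0$, $\Omega_1$, $\Omega_2$, $\Omega_3$, the swing moves, and the planar $\Delta$-moves used to re-establish general position (moving an endpoint, shifting a subdividing point past a vertical alignment, tilting an arc away from horizontal/vertical) — changes the associated labeled braidoid only up to $L$-equivalence. Each such local move on $K$ is confined to a disk, so outside that disk the braidoiding is unaffected; inside, one enumerates the finitely many up-arc configurations the move can affect and exhibits the corresponding braidoid diagrams before and after as $L$-equivalent, typically by a single $L$-move (to ``park'' the strands created or destroyed by the move out of the way), a braidoid $\Omega_2$/$\Omega_3$-move, and possibly a fake swing move when an endpoint is dragged across a vertical line of corresponding ends.

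Putting these together: starting from $\widehat{B_1}$, undo its closure to recover (after a small isotopy into general position, which costs only $L$-equivalence by the first layer) a knotoid diagram $K_1$ with $B_1$ as an output of the algorithm; similarly get $K_2$ with output $B_2$; since $\widehat{B_1}\simeq\widehat{B_2}$, the diagrams $K_1$ and $K_2$ are related by a finite sequence of $\Omega$-moves, and by the second layer each such move lifts to an $L$-equivalence of the associated braidoids, so $B_1\sim_L B_2$. The main obstacle I anticipate is the second layer, specifically the $\Omega_3$-move and the endpoint swing/vertical moves: here the sliding triangles of several up-arcs can interact, the ordering and the classical triangle condition may have to be renegotiated on both sides of the move, and one must verify that the ``repair'' of the subdivision and labeling on one side of the move differs from that on the other side only by $L$-moves and fake moves — this is the case-analysis-heavy heart of the proof, exactly analogous to (but more delicate than) the classical case in \cite{Lathesis} because of the forbidden moves and the presence of the two endpoints. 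A secondary subtlety is ensuring that the parking $L$-moves one introduces can always be chosen with cut-points avoiding vertical alignment with endpoints, crossings, and braidoid ends, which is handled by the general-position clause in the definition of an $L$-move.
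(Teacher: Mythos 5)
Your overall strategy coincides with the paper's: the easy direction is checked move by move, and the hard direction is obtained by showing that the braidoiding algorithm induces a well-defined map $br$ on $L$-equivalence classes (invariance under the choice of subdivision, the labeling of free up-arcs, the ordering and obstruction resolutions, and under the $\Omega$-moves including the endpoint swing moves), followed by the verification that $br\circ cl_L=\mathrm{id}$ and $cl_L\circ br=\mathrm{id}$. Your closing paragraph is exactly the paper's composition of its lemmas on subdivision (Lemma \ref{lem:subd}), relabeling (Lemma \ref{lem:free}), common refinement (Lemma \ref{lem:freesub}) and $\Omega$-move invariance (Lemma \ref{lem:delta}, Proposition \ref{prop:del}), and your identification of the endpoint swing/vertical moves as the delicate cases matches where the paper spends its effort: its Lemma \ref{lem:delta} splits the swing move into the cases of an endpoint staying on a down-arc, a down-arc turning into an up-arc (handled by an $L$-move), and an endpoint crossing the vertical line of a cut-point (producing fake swing or fake forbidden moves, which Lemma \ref{lem:forb} then decomposes into $L$-moves and restricted swing moves).

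The one step whose proposed justification would fail is the relabeling of a free up-arc. You claim that switching the label of a free up-arc from $o$ to $u$ ``is realized by a fake forbidden / fake swing move composed with isotopy on the closure side.'' Fake forbidden and fake swing moves are, by Definition \ref{defn:fake}, moves of the \emph{endpoints} (leg and head) of the braidoid diagram; a generic free up-arc of $K$ need not be anywhere near the endpoints, so no such move can convert the $o$-labeled strand pair into the $u$-labeled one. Moreover, arguing ``on the closure side'' that the two closures are isotopic does not by itself yield $L$-equivalence of the braidoid diagrams --- that implication is precisely the theorem being proved, so the reasoning would be circular. The correct mechanism (the paper's Lemma \ref{lem:free}, inherited from the classical case) works purely on the braidoid side: perform the braidoiding move with label $u$, introduce an $L_o$-move at a point $P^*$ arbitrarily close to the cut-point $P$, and then delete the $u$-labeled pair by an inverse $L_u$-move at the point $Q^*$ where the vertical line through $P$ meets the lower sub-strand; if the sliding triangle of the free up-arc is not free of other arcs, one first refines the subdivision using Lemma \ref{lem:subd}. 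With that step repaired, your plan is essentially the paper's proof.
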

 \begin{proof}
For the proof of Theorem \ref{thm:markov}, we assume that (multi)-knotoid diagrams are in general position.

\smallbreak
The `if' part is clear from the definitions of $L$-moves and fake swing moves. More precisely, let $B_1, B_2$ be two labeled braidoid diagrams related to each other by an $L$-move. Let the arc, illustrated in Figure \ref{fig:alfa}, be a segment of a strand of $B_1$ on which an $L$-move is applied. It can be observed by the same figure that the closure of the resulting strands labeled accordingly to the type of the $L$-move applied, is isotopic to this arc. 
This implies that the closures of $B_1$ and $B_2$ are isotopic. From this, the closure map extends to a well-defined map $cl_L$ on the set of all $L$-equivalence classes of labeled braidoids.
\smallbreak
$cl_L:$ \{$L$-classes of labeled braidoid diagrams\}$\rightarrow$ \{Multi-knotoids in $\mathbb{R}^2$\}.\\
\smallbreak
For showing the `only if' part, we need to show that the map $cl_L$ is a bijection.  We adapt these parts here and check the cases involving the endpoints. For this we first show
the braidoiding algorithm induces a well-defined mapping $br$,\\
\begin{center}
$br$: \{Multi-knotoids in $\mathbb{R}^2$\} $\rightarrow$ \{$L$-classes of labeled braidoid diagrams\}, 
\end{center}
 that associates a (multi)-knotoid $K$ in $\mathbb{R}^2$ to the $L$-class of the braidoid diagram obtained from any representative of $K$ by the braidoiding algorithm. We call this map the \textit{braidoiding map}.
 
In order to show the mapping $br$ is well-defined we need to show that, up to $L$-equivalence, the resulting labeled braidoid diagram is independent of: the choices made for bringing a knotoid diagram into general position, choices of subdividing points for the up-arcs, labelings of free up-arcs, the $\Omega$-moves of knotoid diagrams. The parts of the proof of Theorem \ref{thm:markov} not involving the endpoints are analogous to the case of classical knots and braids \cite{La1, Lathesis}.
\begin{lemma}\label{lem:subd}
Let $K$ be a (multi)-knotoid diagram in $\mathbb{R}^2$ with a subdivision that satisfies the triangle conditions. Adding a subdividing point to an up-arc of $K$  and labeling the new up-arcs same as the initial labeling yields an $L$-equivalent labeled braidoid diagram.
\end{lemma} 
\begin{proof}
We first note that addition of a subdividing point does not violate the triangle conditions. Let $QP$ denote an up-arc of $K$ as depicted in Figure \ref{fig:pfprop}. First let us assume that the vertical line passing through an endpoint  of $K$ does not intersect $QP$. In this case, we can add a new subdividing point on $QP$ and apply braidoiding moves at that point and the point $P$. The proof for showing that the resulting braidoid diagram is $L$-equivalent to the one obtained by applying a braidoiding move at the top-most point $P$ follows similarly as for classical braid diagrams. The reader is directed to \cite{Lathesis} for details.

Now let us assume that the vertical line passing through an endpoint of $K$ intersects with $QP$ at some point on $QP$. Then $QP$ can be seen as the union of two sub-arcs joined at the intersection point and one of which contains the point $P$. The sub-arcs can be re-labeled accordingly to the initial labeling. By the classical argument discussed above, if a new subdividing point is added on $QP$ on the sub-arc containing $P$, then applying a braidoiding move at this point results in a labeled braidoid diagram that is $L$-equivalent to the labeled braidoid diagram obtained by applying a braidoiding move at the point $P$. Suppose now that a new subdividing point $P_1$ is chosen on the sub-arc that does not contain the point $P$.  
As we can verify in Figure \ref{fig:pfprop}, the labeled braidoid diagram resulting from a braidoiding move applied at the point $P$ can be taken to the braidoid diagram resulting from the latter subdivision (that is by applying braidoiding moves at the points $P$ and $P_1$) by an $L$-move that is applied at the point $Q^*$. Here the point $Q^*$ denotes the intersection point of the vertical line passing through $P_1$ and the lower strand containing $Q$ that has been obtained by the initial braidoiding of $QP$. Note that a neighborhood of $Q$ containing the point $Q^*$ is perturbed to slope slightly downwards for enabling application of the $L$-move.
\end{proof}
\begin{figure}[H]
\centering  \includegraphics[width=.7\columnwidth]{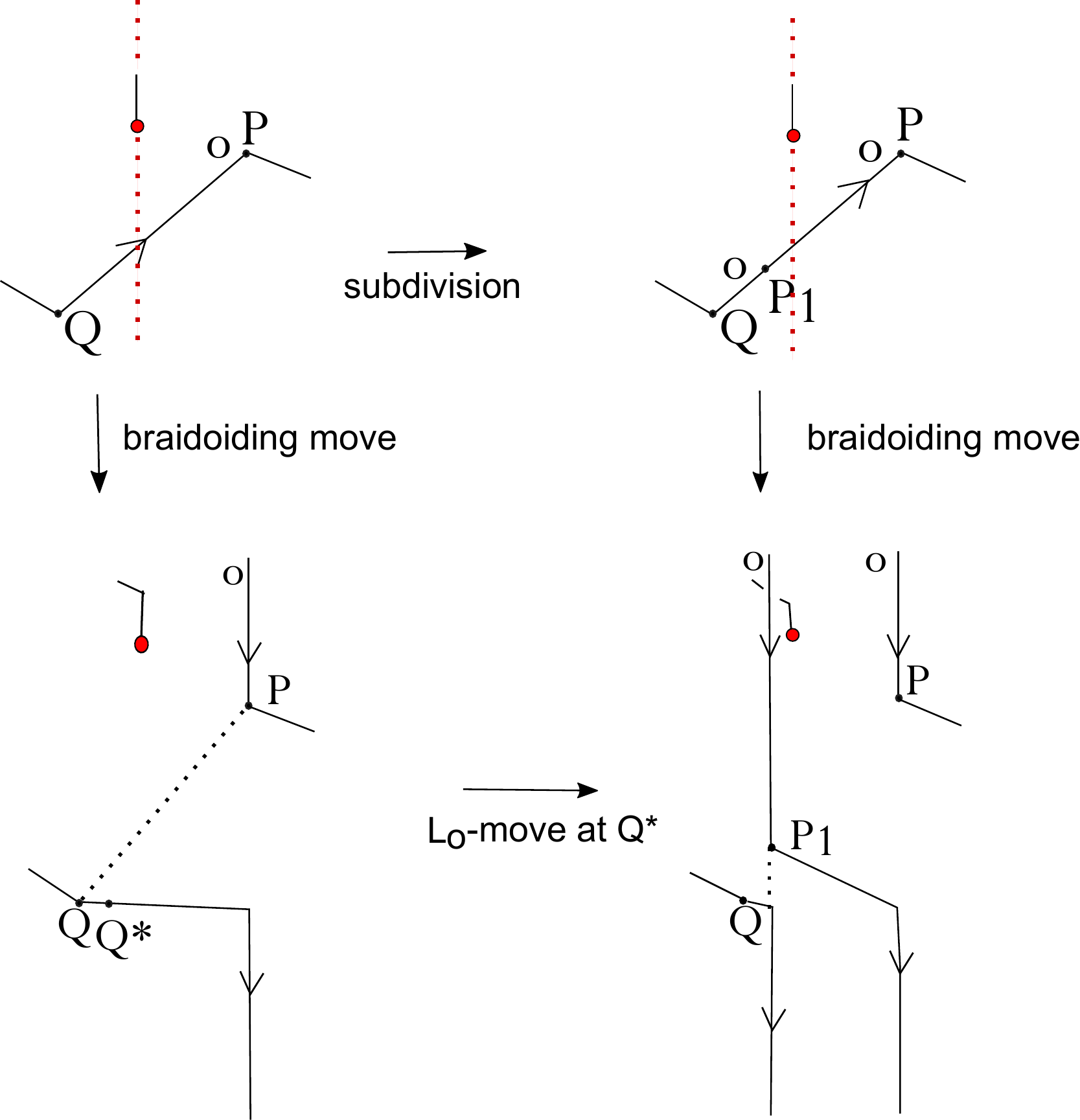}
\caption{Adding a subdividing point on the up-arc yields $L$-equivalence}
\label{fig:pfprop}
\end{figure}

\begin{lemma}\label{lem:free}
Labeling a free up-arc either with $o$ or $u$ does not change the resulting labeled braidoid diagram up to the $L$-equivalence.
\end{lemma}
\begin{proof}
The proof is illustrated in Figure \ref{fig:free} and it goes in analogy with the classical case. In the figure, a free up-arc $QP$ is labeled  both with $u$ and $o$. For simplicity we first assume that the sliding triangle of $QP$ is also free of any arcs. 

We start by applying a braidoiding move on $QP$ labeled with $u$ at the point $P$ to obtain a pair of braidoid strands. Then we apply an $L_o$-move at a point (the point $P^*$ in the figure) that is arbitrarily close to $P$ so that there is no endpoint or braidoid end in the vertical strip defined by $P$ and $P^*$. 
Let $Q^*$ be the point where the vertical line passing at $P$ intersects the resulting lower piece of the sub-strand containing $Q$. We assume that the piece of strand containing $Q^*$ slopes downward. Deletion of an $L_u$-move at the point $Q^*$ cancels the pair of braidoid strands labeled $u$. By the same figure we can verify that the resulting labeled braidoid diagram can be turned into to the labeled braidoid diagram obtained by a braidoiding move on $QP$ when it is labeled with $o$, by a sequence of $L$-moves: First by applying an $L_o$-move at $Q^*$ and then deleting an $L_o$-move to cancel one of the  pairs of braidoid strands.

The sliding triangle of the up-arc may intersect with other arcs of the diagram as illustrated in Figure \ref{fig:relabel}. In this figure, we show a case where a free up-arc labeled with $u$ intersects the sliding triangle of the free up-arc labeled with $o$. By using Lemma \ref{lem:subd}, we first subdivide the free up-arc labeled with $o$ into small enough sub-arcs to make all disks of the corresponding sliding triangles free of arcs.  We now re-label each of the free up-arcs resulting from the new subdivision with $o$. By the discussion above, we know that the diagram obtained by applying braidoiding moves at each added subdividing points on the up-arc is $L$-equivalent to the one obtained by applying braidoiding moves to the up-arc in the third row where each sub-arc is labeled with $u$. Finally, again by Lemma \ref{lem:subd} we know that if we delete all the added subdividing points on the up-arc keeping the initial subdividing points and the label $u$ then by a braidoiding move we obtain a labeled braidoid diagram that is $L$-equivalent to the initial labeled braidoid diagram obtained by a braidoiding move applied on the up-arc labeled with $u$. Notice that, to ensure the classical triangle condition in the final step we also change the labeling of the intersecting arc from $u$ to $o$.
 \end{proof}

\begin{figure}[H]
\centering
\includegraphics[width=.9\columnwidth]{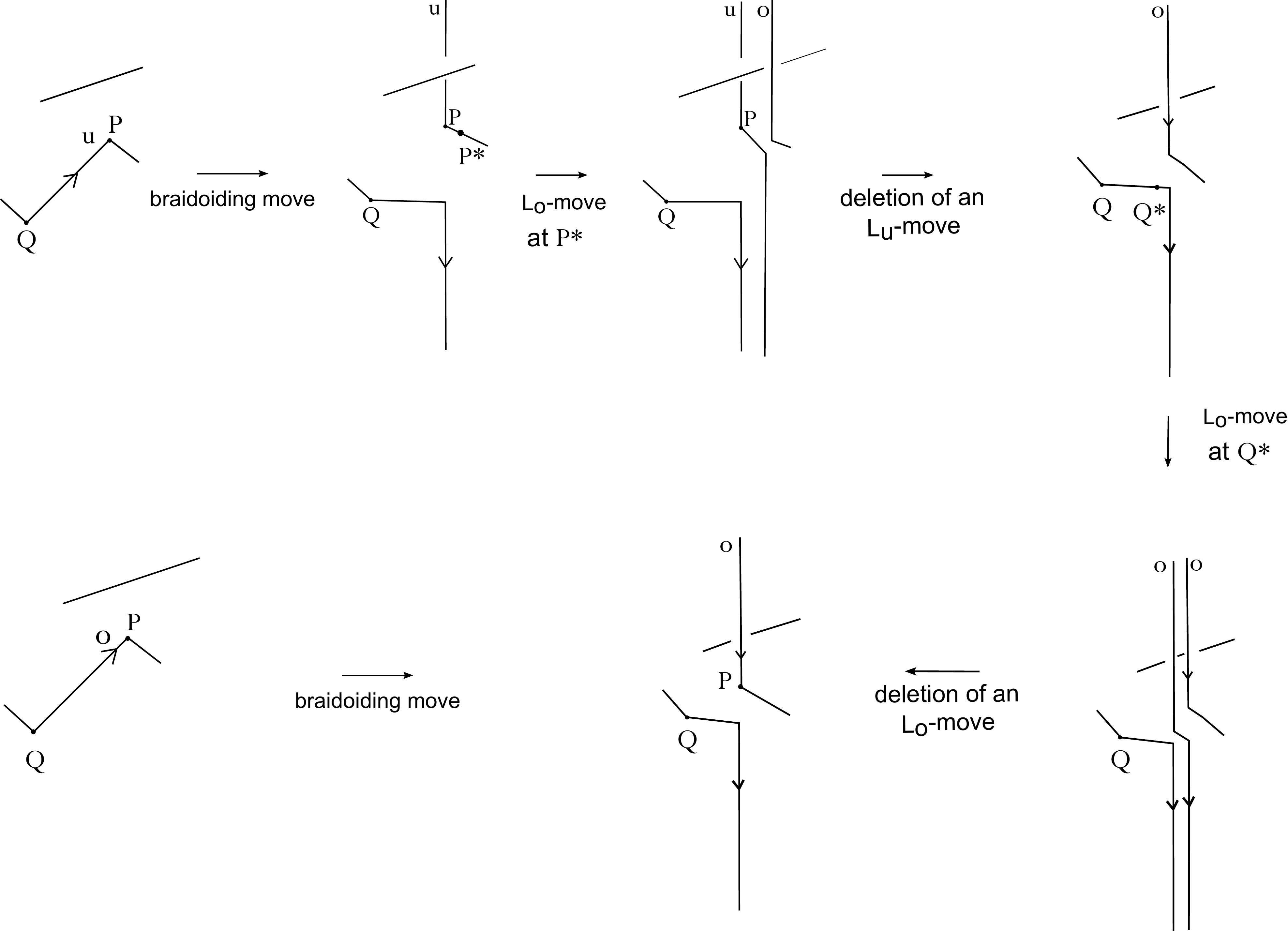}
\caption{Re-labeling the free up-arc and the $L$-equivalence}
\label{fig:free}
\end{figure}

\begin{figure}[H]
\centering\includegraphics[width=1\columnwidth]{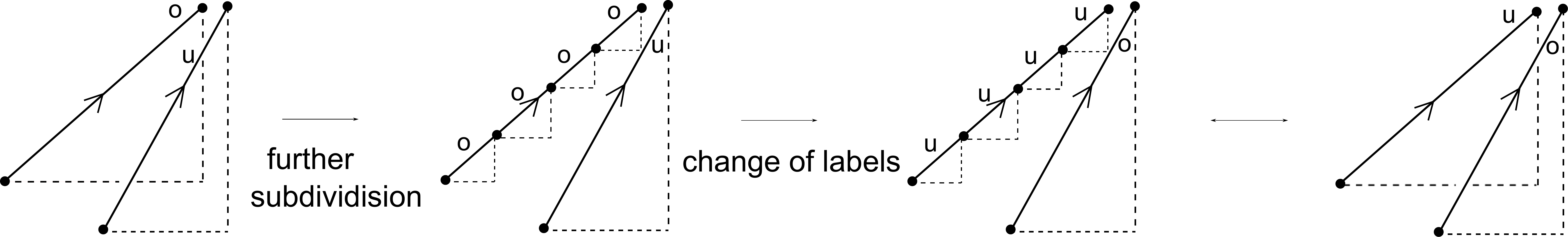}
\caption{Re-labeling the free up-arcs}
\label{fig:relabel}
\end{figure}
\begin{corollary}
If we have an appropriate choice of relabelling the up-arcs resulting from a further subdivision on $K$ then by Lemmas \ref{lem:subd} and \ref{lem:free}, the resulting labeled braidoid diagrams are $L$-equivalent. 
\end{corollary}

\begin{lemma}\label{lem:freesub}
 Two labeled braidoid diagrams that are obtained with respect to any two subdivisions $S_1, S_2$ on a knotoid diagram $K$ that satisfy the triangle conditions, are $L$-equivalent to each other.
\end{lemma}
\begin{proof}
It is clear that any subdivision further than $S_1$ or $S_2$ satisfies both triangle conditions. Consider the further subdivision $S_1\cup S_2$ of both $S_1$ and $S_2$  on $K$. By Lemmas \ref{lem:subd} and \ref{lem:free},  the labeled braidoid diagram that results from the subdivision $S_1\cup S_2$ is $L$-equivalent to the labeled braidoid diagrams that result from the subdivision $S_1$ and $S_2$. Since the $L$-equivalence is an equivalence relation, the lemma follows.
\end{proof}

Finally we shall show the $L$-equivalence under the $\Omega$-moves.
\begin{lemma}\label{lem:delta}
Two (multi-)knotoid diagrams in $\mathbb{R}^2$ are related to each other by planar isotopy and $\Omega$- moves only if the corresponding labeled braidoid diagrams are related to each other by labeled braidoid isotopy moves,  $L$-moves, fake swing moves or fake forbidden moves.
\end{lemma}
\begin{proof}
Let us start with the observation that by imposing the classical triangle condition, we can assume that the knotoid isotopy moves take place without interfering with the braidoiding process of the rest of the diagrams that lie outside the local regions of the moves. In fact we can assume the up-arcs outside the move disks are all turned into braidoid strands and only the arcs lying in the move disks are left for elimination.

Examining the $\Omega_1$, $\Omega_2$ and $\Omega_3$-moves follows similarly with the examination of Reidemeister moves on classical knots/links under the braiding moves \cite{LR1, Lathesis}. Here in Figure \ref{fig:reidpf} we give an illustration for the $\Omega_1$-move away from the endpoints and how it is transformed to an $L$-move under a braidoiding move. 
\begin{figure}[H]
\centering 
\includegraphics[width=.8\textwidth]{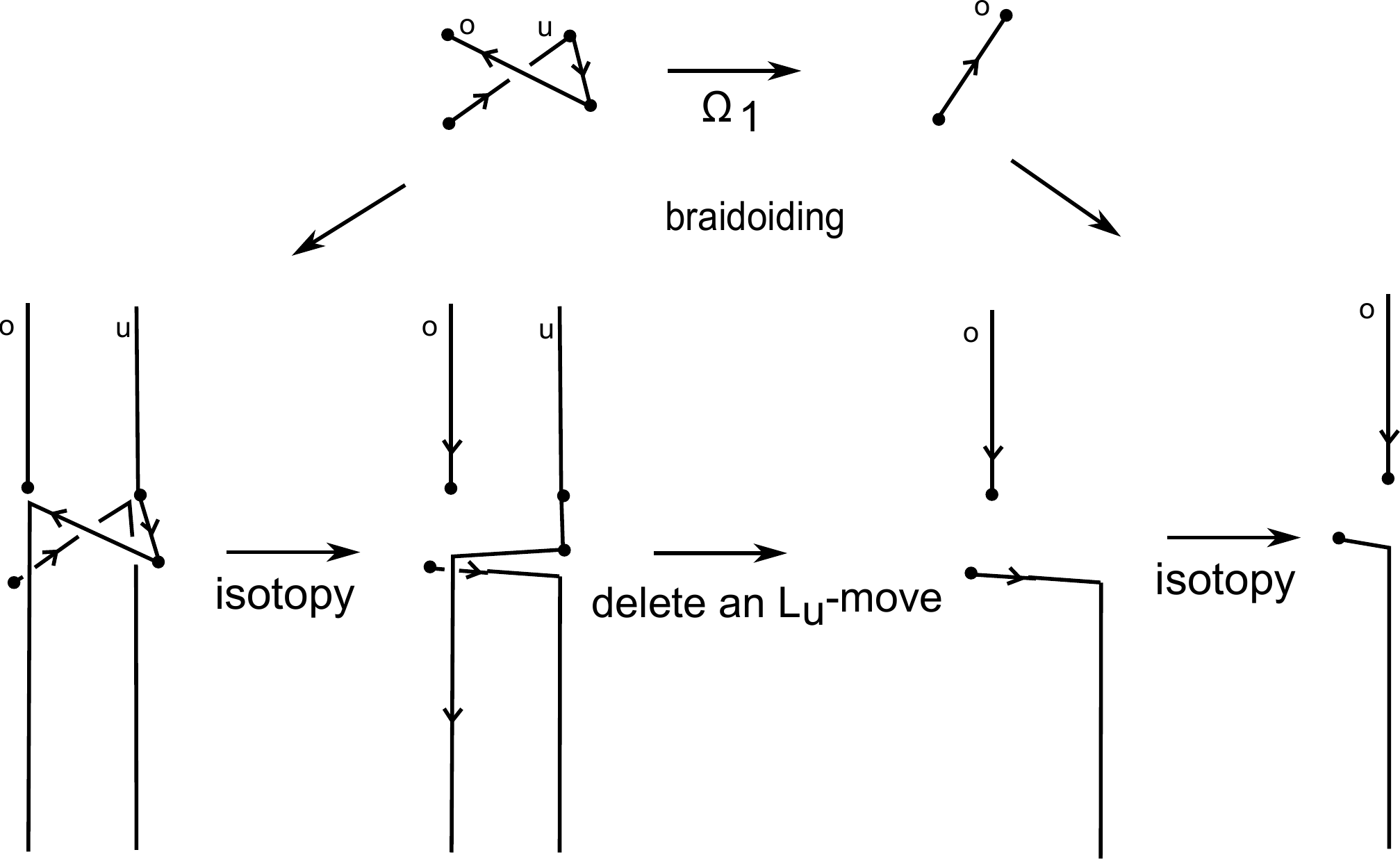}
\caption{An $\Omega_1$-move under braidoiding }
\label{fig:reidpf}
\end{figure}

In this setting, it is crucial to examine specifically the swing moves displacing the endpoints of a knotoid/multi-knotoid diagram.  A swing move may displace an endpoint that lies on a down-arc and in a way that the endpoint does not change its position with respect to a vertical line passing through a cut-point in the diagram then the resulting (labeled) braidoid diagrams are clearly related to each other by a restricted swing move. 

A swing move may move an endpoint so that the swinging arc changes from being a down-arc to being an up-arc by keeping the endpoint at the same vertical alignment as in Figure \ref{fig:case1}. In the braidoiding process, the down-arc is kept but a braidoiding move is applied on the resulting up-arc. It can be verified by the figure that the resulting labeled braidoid diagrams obtained from two knotoid diagrams which are related by such swing move,  are related to each other by an $L$-move. See Figure \ref{fig:case1}. Note that the points $P$ and $P^*$ shown in the figure that are chosen for applying the $L_o$-move on the resulting braidoid diagram and the braidoiding move on the resulting knotoid diagram, respectively, are vertically aligned.

 A swing move may also cause the endpoint to change its position with respect to a vertical line passing through a cut-point on the diagram as illustrated in Figure \ref{fig:case2}. This means that the endpoint of the  swinging arc in the resulting labeled braidoid diagram crosses the vertical line passing through the new pair of corresponding braidoid ends. This situation can be examined in two separate cases. One case is that the swinging arc is a down-arc and remains as a down-arc during the move. In this case, no braidoiding move applies on the arc and the resulting braidoid diagrams are related to each other by a fake forbidden move (recall Definition \ref{defn:fake}). The other case is that the swinging arc is an up-arc and the endpoint is displaced with respect to a vertical line passing through a cut-point lying on some up-arc which can be the swinging arc itself. In this case, we apply a last braidoiding move on the swinging arc before and after the move. Then the comparison of the resulting (labeled) braidoid diagrams is reduced to the previous case and they are related to each other by a fake swing move when the cut-point lies on the swinging arc or a fake forbidden move as discussed in the previous case. 
 
 From the above discussion we can deduce Lemma \ref{lem:delta}.
 \end{proof}

\begin{figure}[H]
\centering\includegraphics[width=.6\textwidth]{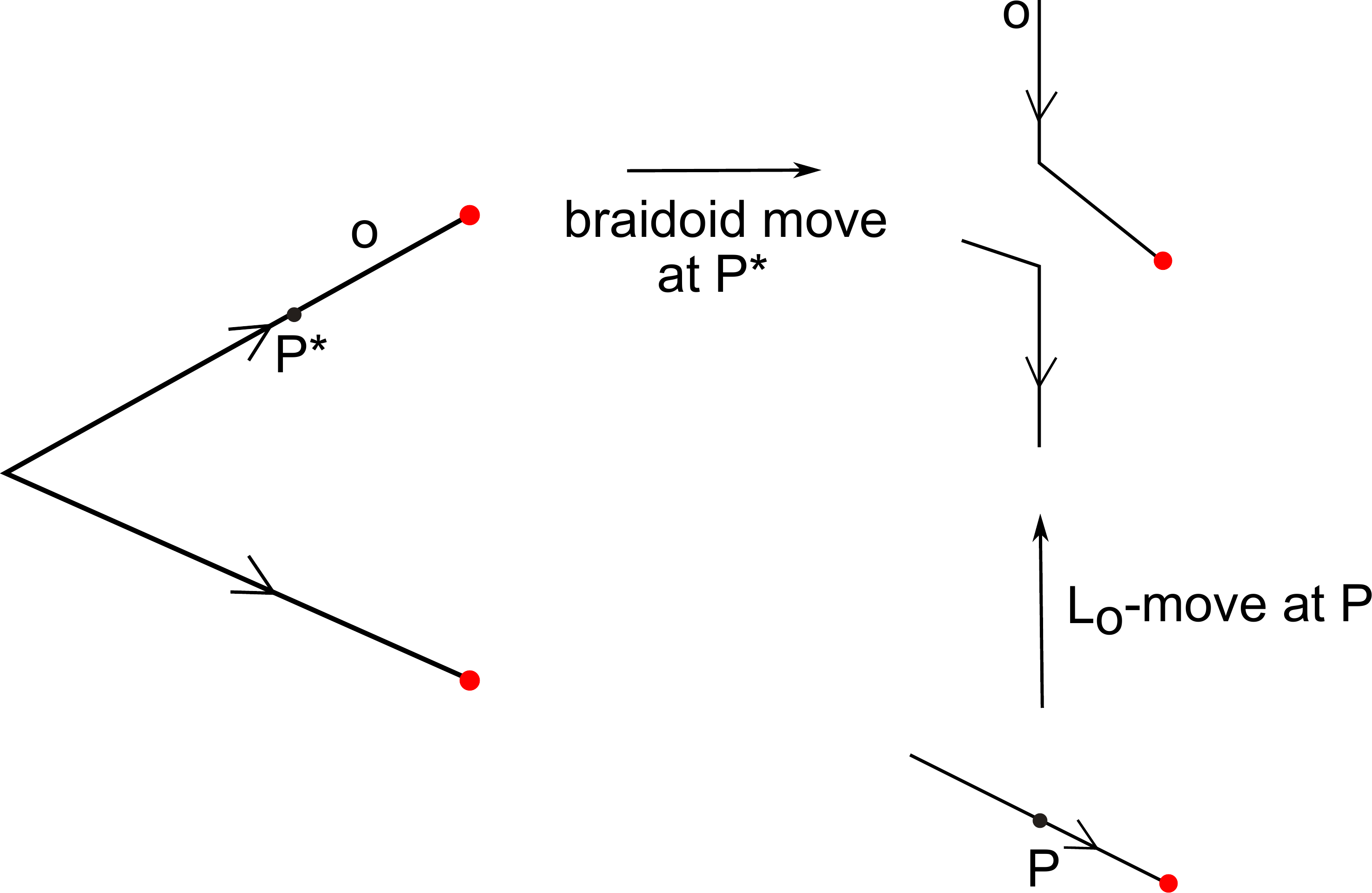}
\caption{ A swing move changing a down-arc to an up-arc}
\label{fig:case1}
\end{figure}
\begin{figure}[H]
\centering\includegraphics[width=.7\textwidth]{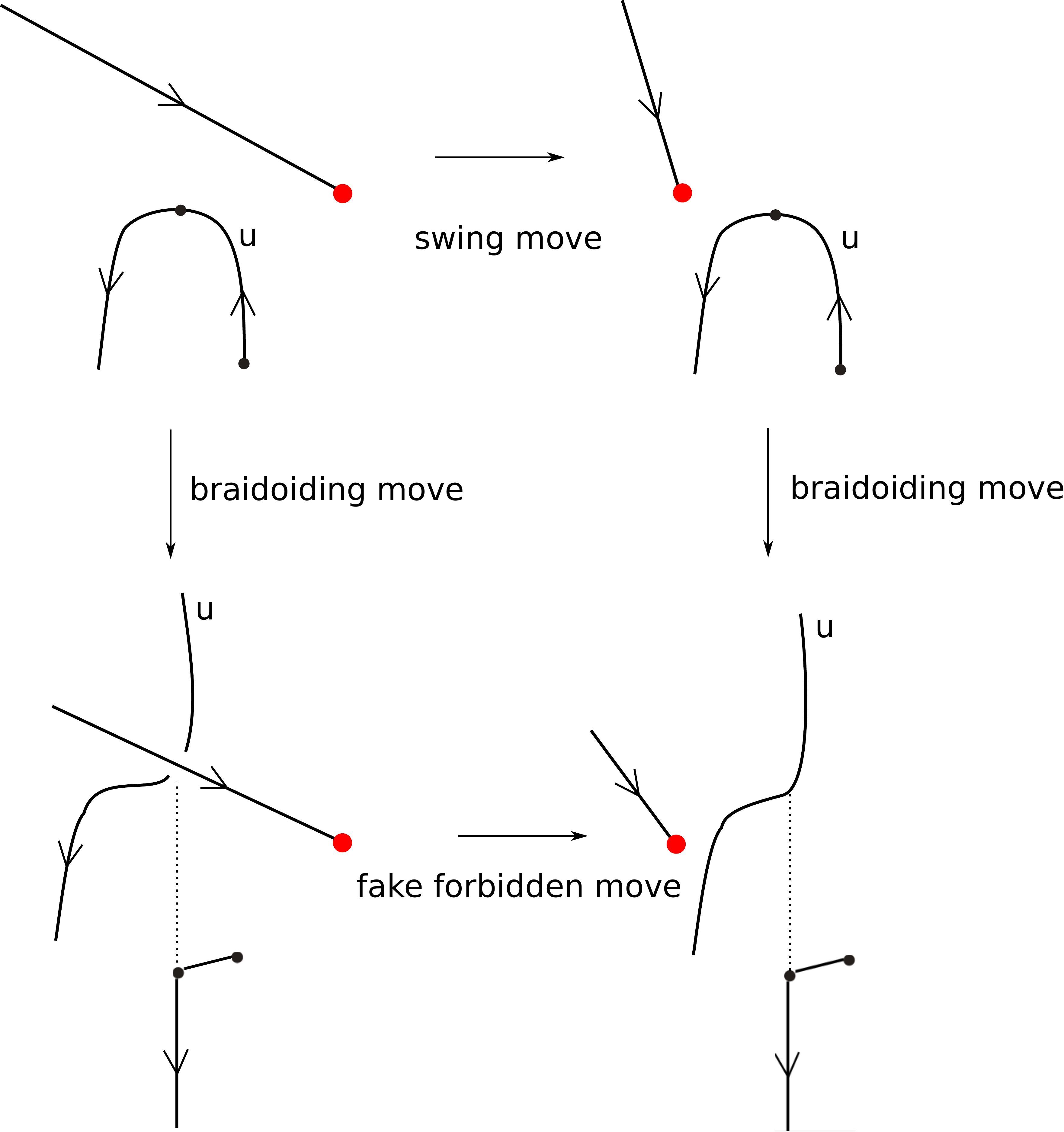}
\caption{ A swing move displacing an endpoint with respect to a cut-point}
\label{fig:case2}
\end{figure}

\begin{lemma}\label{lem:forb}
A fake forbidden move on a labeled braidoid diagram is composed of a finite number of $L$-moves and fake or restricted swing moves.
\end{lemma}
\begin{proof}
The proof can be verified by Figure \ref{fig:forbcomp}. We see  in the figure that a fake forbidden move is decomposed into a sequence of $L_{u}$-moves and a restricted swing move. This can be varied to include $L_{o}$-moves (when the swinging arc goes under a braidoid strand) and fake swing moves (when the swinging arc swings across the vertical line determined by the braidoid end that is connected to the swinging endpoint with the swinging arc, see the first instance of Figure \ref{fig:fakee}.
\begin{figure}[H]
\centering\includegraphics[width=1\textwidth]{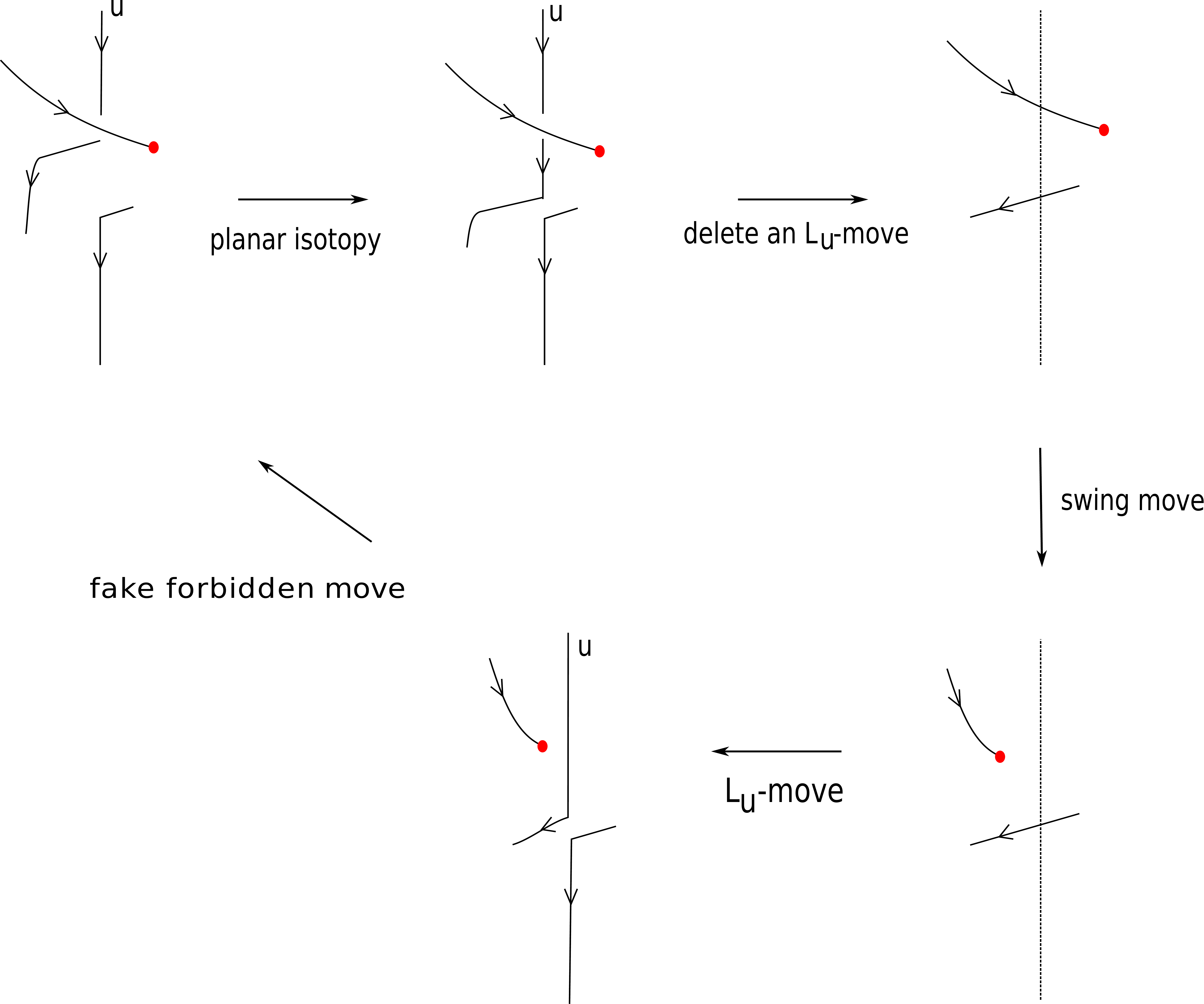}
\caption{ A fake forbidden move is decomposed into $L$-moves and swing moves}
\label{fig:forbcomp}
\end{figure}

\end{proof}

\begin{proposition}\label{prop:del}
Two (multi-)knotoid diagrams in $\mathbb{R}^2$ are related to each other by planar isotopy and $\Omega$-moves only if the corresponding labeled braidoid diagrams are $L$-equivalent. \end{proposition}
\begin{proof}
The proposition follows from Lemma \ref{lem:delta} and Lemma \ref{lem:forb}.
\end{proof}
From Lemma \ref{lem:freesub} and Proposition \ref{prop:del} it follows that the braidoing map $br$ is a well-defined map. Lastly it needs to be shown that $br$ is the inverse map of $cl_L$.
Let $B$ be a labeled braidoid diagram. It is clear that the closure diagram of $B$, $\widehat{B}$, is a knotoid diagram in general position whose only up-arcs are the connection arcs taking place in closing $B$. The braidoiding algorithm eliminates each up-arc of $\widehat{B}$ and turns it into a labeled braidoid diagram which is isotopic to $B$. Thus,
$$br \circ cl_L = id.$$
 Given a knotoid diagram $K$ in general position, by applying the braidoiding algorithm we obtain a labeled braidoid diagram $B$. Clearly the closure of any labeled braidoid diagram that is $L$-equivalent to $B$ is isotopic to $K$. 
 Hence it follows\\
$$cl_L \circ br=id.$$

By the above the proof of Theorem \ref{thm:markov} is now completed.
\end{proof}

\section{\textbf{From braidoids to braids}}\label{sec:tobraids}

In this section we show another application of the $L$-moves, namely the induced relation between the set of all braidoid diagrams and the set of all classical or virtual braid diagrams. 
 
We define a {\it virtual braidoid diagram} to be a braidoid containing classical as well as virtual crossings and a {\it virtual braidoid} to be the equivalence class of virtual braidoid diagrams under the moves of the braidoid diagrams extended by the virtual braid moves. For more on virtual braids and the virtual braid group the reader may consult \cite{Kau2,KL1,Ba} and references therein. Further, for  analogues of the Alexander and the Markov theorems for virtual knots and virtual braids see \cite{KL2,Ka}. 

 In analogy to knotoids closing to classical or virtual knots, a braidoid resp. a virtual braidoid diagram is closed to a classical resp. a virtual braid diagram by connecting its endpoints with a simple arc in the plane. There are only finitely many intersection points between the connecting arc and the braidoid diagram,  which are  transversal double points endowed with under, over resp. virtual crossing data. By a classical topological argument, the arc connecting the endpoints of a braidoid diagram is unique up to isotopy in the classical or in the virtual sense.

\subsection{The underpass closure and the virtual closure}


  We fix the connecting arc to be passing {\it under} any other arc. In order to receive as outcome a braid diagram we must also ensure the braid monotonicity condition. There can be two cases for the connection. If the head of the braidoid diagram appears before its leg (as $t$ increases) then a downward directed arc can be chosen to connect them, see the top row of Figure~\ref{fig:u} for an abstract illustration. If, however, the leg of the braidoid diagram appears before its head then any arc connecting them is an arc that is directed upward, an up-arc, in the resulting tangle diagram. In this case, we apply a braiding move 
	on the connecting arc which turns it into two new corresponding braid strands.  See the bottom row of Figure~\ref{fig:u} for an abstraction of this case. 
  \begin{definition}\normalfont
For any given braidoid diagram we define the \textit{underpass closure} of its endpoints as follows. If the head is in a relatively higher position than the leg, we join them together with a straight under-passing arc. If the leg is located higher than the head then we extend both endpoints by a pair of corresponding underpassing braid strands, emanating from the leg and the head respectively, and vertically aligned with a point on the straight arc connecting the two endpoints.
\end{definition}
	
	The above apply analogously for the case where the connecting arc is passing {\it virtually} any other arc. In this case  we obtain in the end a virtual braid. and call the closure the \textit{virtual closure}.  Especially in the case where the leg of the braidoid diagram appears before its head we eliminate the joining up-arc by a virtual braiding move, whereby all crossings in the resulting new pair of strands are virtual, see \cite{KL2}.




\begin{figure}[H]
\centering 
\includegraphics[width=.8\textwidth]{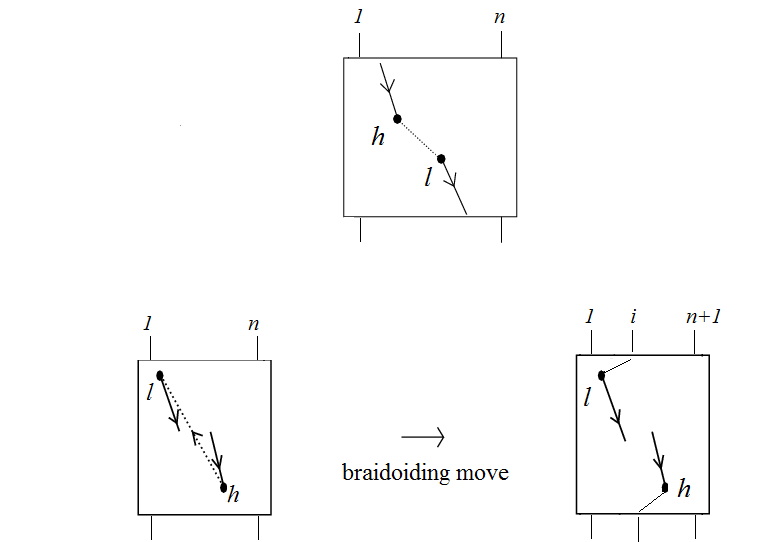}
\caption{Abstract examples for the underpass closure}
\label{fig:u}
\end{figure} 


 \subsection{The induced mappings}

We shall now establish that the underpass closure (resp. the virtual closure) defined on braidoid diagrams induces a well-defined map on the set of braidoids (that is, isotopy classes of braidoid diagrams). We have the following proposition.

\begin{proposition}\label{prop:underp}
The underpass closure  is a well-defined surjective map from the set of braidoids to the set of $L$-equivalence classes of classical braids. Similarly, the virtual closure  is a well-defined surjective map from the set of virtual braidoids to the set of virtual $L$-equivalence classes of virtual braids.
\end{proposition}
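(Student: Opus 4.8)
The plan is to prove two things: that the underpass closure descends to a well-defined map on braidoids (i.e.\ on braidoid isotopy classes), and that this induced map is surjective onto the set of $L$-equivalence classes of classical braids; the virtual statement will then follow by the same scheme with the obvious replacements.

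\emph{Surjectivity.} Given a classical braid diagram $\beta$ on $n$ strands, pick a crossing-free point $P$ on one of its strands and delete from that strand a short open segment centred at $P$; by a small perturbation we may assume $P$ is not vertically aligned with any braidoid end. Since every strand is oriented downward, this produces a braidoid diagram $B_\beta$ with two free strands: the portion above $P$ terminates at a head $h$, the portion below $P$ starts at a leg $l$, and $h$ lies above $l$. The underpass closure of $B_\beta$ reconnects $h$ to $l$ by a straight under-passing arc; if the deleted segment is short enough this arc is isotopic, in the braid sense, to the deleted segment, so $\widehat{B_\beta}$ equals $\beta$ up to braid isotopy. Hence every braid, a fortiori every $L$-equivalence class of braids, is in the image.

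\emph{Well-definedness of the closure of a fixed diagram, up to $L$-equivalence.} As recorded before the statement, the arc used to close a braidoid diagram is unique up to ambient isotopy; the only additional choice is the braiding move that restores braid monotonicity when this arc is forced to be an up-arc, i.e.\ when the leg lies above the head. Two braiding moves performed on the same up-arc produce $L$-equivalent braids — this is exactly the classical fact underlying the $L$-move calculus \cite{Lathesis,LR1}, applied to the single up-arc joining $h$ and $l$. Consequently the underpass closure of a fixed braidoid diagram is a well-defined element of the set of $L$-classes of braids.

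\emph{Invariance under braidoid isotopy.} It remains to run through the braidoid isotopy moves of Definition \ref{defn:isotopy} and check that each changes the underpass closure only by braid isotopy and $L$-moves. The connecting arc is adjoined during closure and runs entirely under the rest of the diagram, so it is neither moved by, nor obstructs, an $\Omega_0$-, $\Omega_2$- or $\Omega_3$-move taking place among the strands of $B$; such a move therefore transforms into the corresponding braid move, which is part of braid isotopy. A vertical move of an endpoint that does not reverse the relative heights of $h$ and $l$ becomes a braid isotopy of the closure; if it does reverse their heights, the straight under-arc is replaced by an up-arc, a braiding move is applied, and one checks directly from the corresponding picture that the two closures differ by a single $L_u$-move. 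A swing move of an endpoint is handled the same way: if the swinging arc stays braid-monotone the closure changes by braid isotopy, while if the swing forces the connecting arc to acquire an upward piece one performs a last braiding move before and after the swing and sees that the two closures are related by $L$-moves together with braid isotopy. Since $L$-equivalence of braids is an equivalence relation, a finite sequence of braidoid isotopy moves between $B_1$ and $B_2$ yields $\widehat{B_1}\sim_L\widehat{B_2}$, so the map is well defined on braidoids. For virtual braidoids one argues verbatim, replacing ``under-passing'' by ``virtually passing'', the braiding move by the virtual braiding move, braid isotopy and $L$-moves by virtual braid isotopy and virtual $L$-moves \cite{KL2}, and invoking the uniqueness of the connecting arc in the virtual sense. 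The step I expect to demand the most care is the invariance under the endpoint moves: one must confirm that \emph{every} way in which a vertical or swing move can destroy the braid-monotonicity of the connecting arc is repaired, after inserting the appropriate braiding move, by $L$-moves alone — never by a genuine forbidden-type modification — and that the choice of where to perform that repairing braiding move is itself immaterial up to $L$-equivalence.
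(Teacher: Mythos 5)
Your proposal is correct and follows essentially the same route as the paper: surjectivity by excising a crossing-free (or underpassing) arc from a braid, and well-definedness by checking each braidoid isotopy move, with endpoint-fixing moves becoming braid isotopy and the height- or alignment-changing vertical and swing moves becoming $L$-moves on the closure. The only point where the paper is more specific is that it identifies the effect of a swing move on an up-arc connecting arc as conjugation of the braid, which it then cites as a known special case of $L$-equivalence.
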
 
\begin{proof}
It is straightforward to show that the underpass closure is a surjective map. Indeed, by cutting out from a given braid diagram an open arc that either contains no crossings or it is an underpassing arc (resp. a virtual arc), we obtain a braidoid diagram whose underpass closure (resp. virtual closure) is clearly the original braidoid (resp. virtual braidoid) diagram.

Consider a braidoid isotopy on a given braidoid diagram (resp. virtual braidoid isotopy on a virtual braidoid diagram). The isotopy may keep the endpoints fixed or it may change their relative vertical or horizontal positions. If the  isotopy keeps the endpoints fixed then  after the underpass (resp. virtual) closure this isotopy clearly transforms into braid isotopy (recall Section \ref{sec:isotopy}). Suppose that a swing move on one of the endpoints takes place. If the connecting arc is a down-arc then the corresponding braid diagrams clearly differ by braid isotopy (resp. virtual braid isotopy). If, however, the connecting arc is an up-arc then the swing move may change the vertical level (the ordering amongst the strands) of the resulting two braid strands. For an illustration see Figure \ref{fig:sw}. Yet, the two resulting braids will differ by conjugation, which is known to be a special case of $L$-equivalence \cite{LR1} (resp. virtual $L$-equivalence \cite{KL2}).  

Suppose now that a vertical move on the endpoints takes place. If this move does not change the relative heights of the endpoints, it is clear that the connecting arcs are isotopic.  
If, however, the move changes the relative heights of the endpoints then the connecting arc changes from a down-arc to an up-arc or vice versa, see Figure \ref{fig:well}. In this case, the two resulting braid diagrams differ by an $L_under$ move (resp. virtual $L$-move)  applied on the connecting down-arc of the one diagram and at the same vertical level as that of the braiding move applied on the connecting up-arc of the other diagram.

\begin{figure}[H]
\includegraphics[width=.55\textwidth]{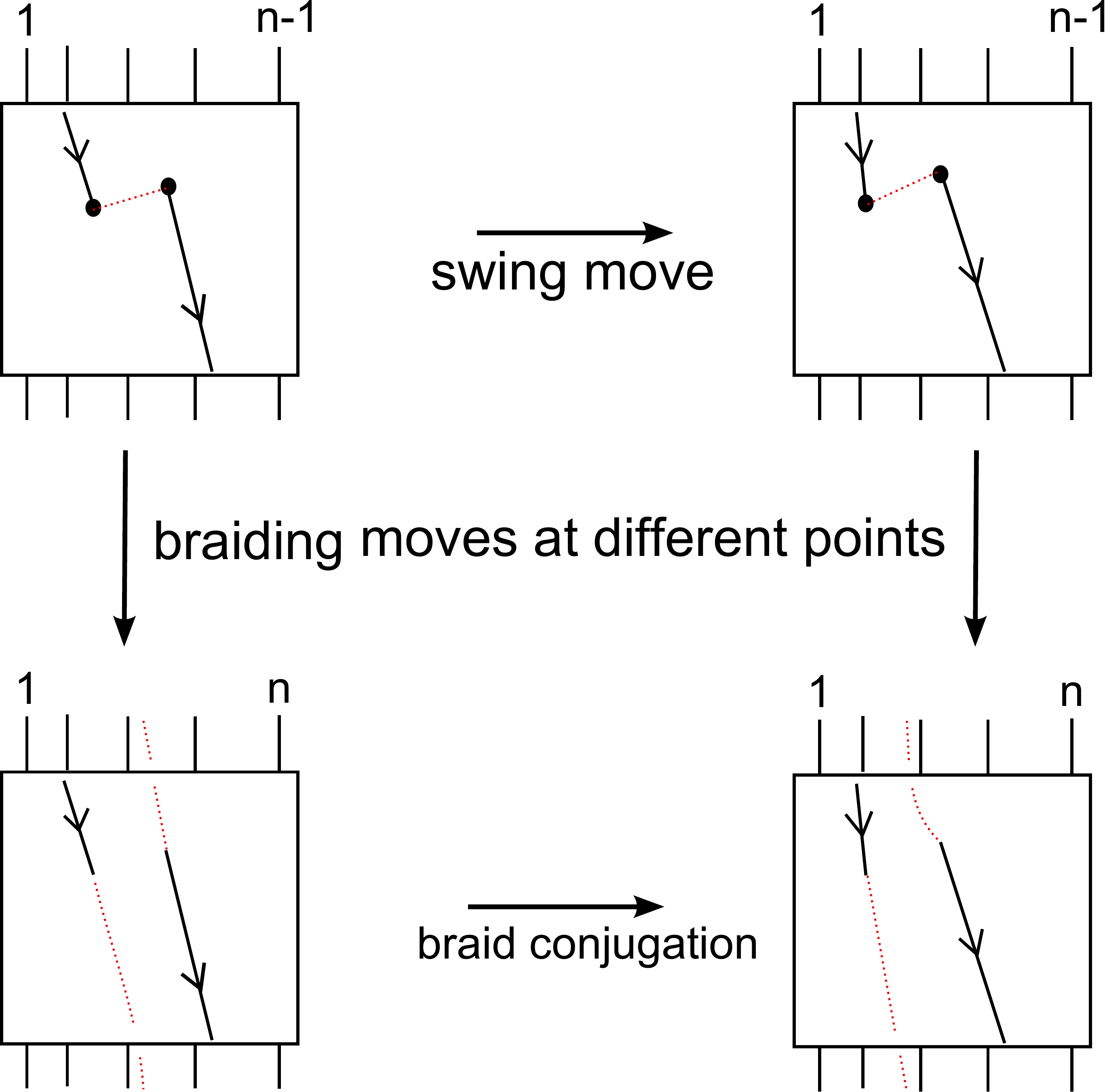}
\caption{A swing move transforms into an $L$-move on the underpass closure}
\label{fig:sw}
\end{figure}

\begin{figure}[H]
\includegraphics[width=.55\textwidth]{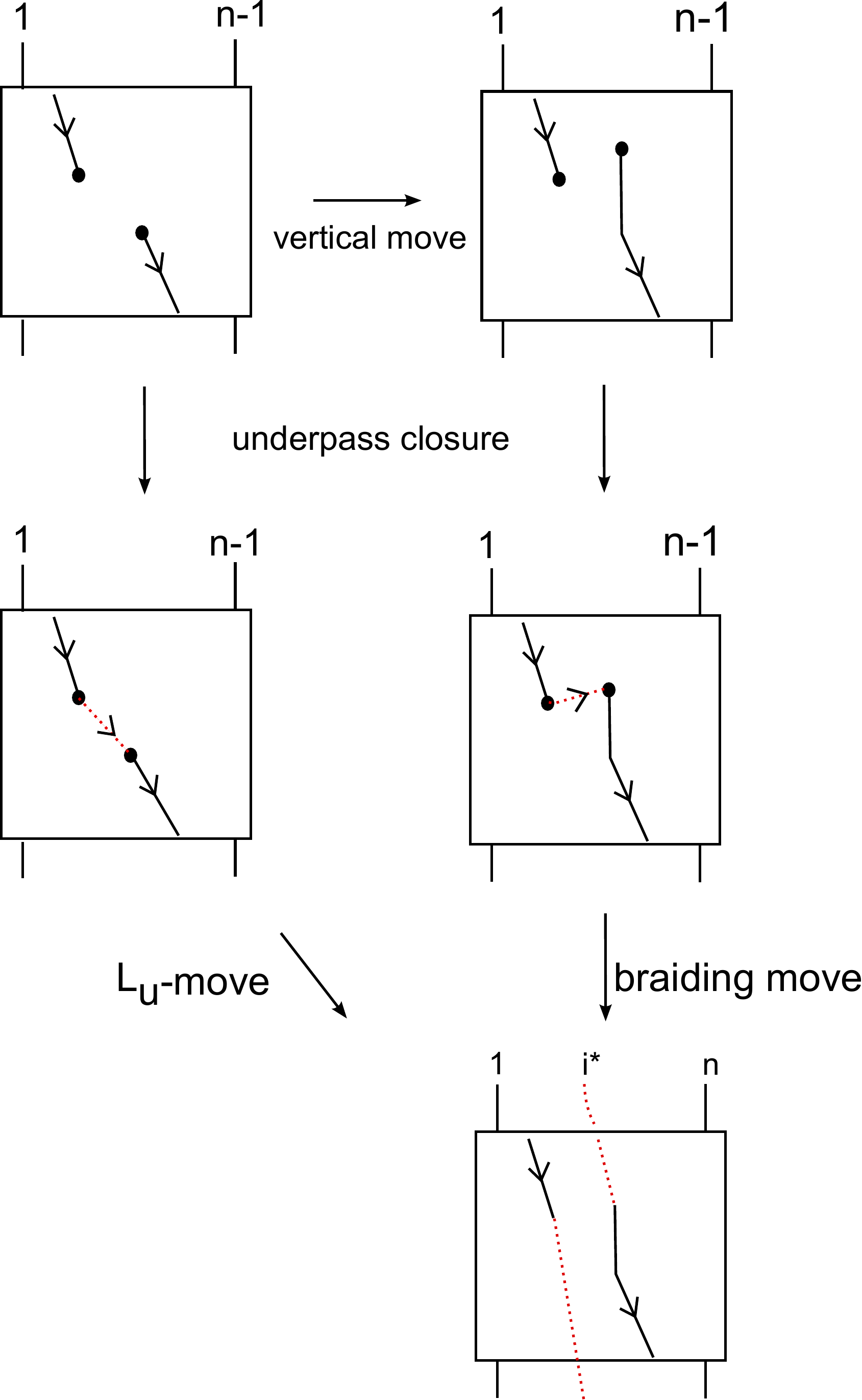}
\caption{A vertical move transforms into an $L$-move on the underpass closure}
\label{fig:well}
\end{figure}

By the arguments above the proof of Proposition \ref{prop:underp} is completed.
\end{proof}

\section{Discussion}
The theory of braidoids is a new diagrammatic setting extending the classical braid theory and giving rise to many new proble ms;   the underlying algebraic structure for braidoids is not known yet and we will examine in a subsequent paper. However, we know how to partition a braidoid diagram into smaller diagrams that we call elementary blocks \cite{GL1, Gthesis}. Knots and knotoids have been used for topological tabulation of proteins which are open molecular chains. In \cite{GL1} we suggest that braidoids can set up an algebraic way for tabulating proteins by the use of elementary blocks. 
\\
\section*{Acknowledgement}
The first author cordially thanks the Oberwolfach Research Institute for Mathematics for providing her a peaceful research environment during the time of completion of this paper.

\end{document}